\documentclass[a4paper]{amsart}
\pdfoutput=1

\usepackage[utf8]{inputenc}
\usepackage[T1]{fontenc}
\usepackage{lmodern}
\usepackage{amssymb,amsxtra}
\usepackage{nicefrac,mathtools,enumitem}
\setlist[enumerate,1]{label=\textup{(\arabic*)}}

\usepackage{tikz}
\usetikzlibrary{matrix}
\tikzset{cd/.style=matrix of math nodes,row sep=2em,column sep=2em, text height=1.5ex, text depth=0.5ex}
\tikzset{cdar/.style=->,auto}
\tikzset{overar/.style={draw=white,double=black,double distance=.4pt,very thick}}

\usepackage{microtype}
\usepackage[pdftitle={Braided free orthogonal quantum groups},
  pdfauthor={Ralf Meyer, Sutanu Roy},
  pdfsubject={Mathematics; MSC }
]{hyperref}
\usepackage[lite]{amsrefs}

\renewcommand{\PrintDOI}[1]{\href{http://dx.doi.org/\detokenize{#1}}{doi: \detokenize{#1}}}

\BibSpec{book}{%
  +{}  {\PrintPrimary}                {transition}
  +{,} { \textit}                     {title}
  +{.} { }                            {part}
  +{:} { \textit}                     {subtitle}
  +{,} { \PrintEdition}               {edition}
  +{}  { \PrintEditorsB}              {editor}
  +{,} { \PrintTranslatorsC}          {translator}
  +{,} { \PrintContributions}         {contribution}
  +{,} { }                            {series}
  +{,} { \voltext}                    {volume}
  +{,} { }                            {publisher}
  +{,} { }                            {organization}
  +{,} { }                            {address}
  +{,} { \PrintDateB}                 {date}
  +{,} { }                            {status}
  +{}  { \parenthesize}               {language}
  +{}  { \PrintTranslation}           {translation}
  +{;} { \PrintReprint}               {reprint}
  +{.} { }                            {note}
  +{.} {}                             {transition}
  +{} { \PrintDOI}                   {doi}
  +{} { available at \url}            {eprint}
  +{}  {\SentenceSpace \PrintReviews} {review}
}

\numberwithin{equation}{section}

\theoremstyle{plain}
\newtheorem{theorem}[equation]{Theorem}
\newtheorem{lemma}[equation]{Lemma}
\newtheorem{proposition}[equation]{Proposition}

\newtheorem{corollary}[equation]{Corollary}

\theoremstyle{definition}

\theoremstyle{remark}
\newtheorem{remark}[equation]{Remark}

\newcommand{\tenscorep}{\mathbin{\begin{tikzpicture}[baseline,x=.75ex,y=.75ex] \draw (-0.8,1.15)--(0.8,1.15);\draw(0,-0.25)--(0,1.15); \draw (0,0.75) circle [radius = 1];\end{tikzpicture}}}

\newcommand*{\Braiding}[2]{\begin{tikzpicture}[baseline]
    \draw[-] (0,0) -- (1.4ex,1.4ex) node[right,inner sep=0pt] {$\scriptstyle #2$};
    \draw[-,draw=white,line width=2.4pt] (0,1.4ex) -- (1.4ex,0);
    \draw[-] (1.4ex,0) -- (0,1.4ex) node[left,inner sep=0pt] {$\scriptstyle #1$};
  \end{tikzpicture}}
\newcommand*{\Dualbraiding}[2]{\begin{tikzpicture}[baseline]
    \draw[-] (1.4ex,0) -- (0,1.4ex) node[left,inner sep=0pt] {$\scriptstyle #1$};
    \draw[-,draw=white,line width=2.4pt] (0,0) -- (1.4ex,1.4ex);
    \draw[-] (0,0) -- (1.4ex,1.4ex) node[right,inner sep=0pt] {$\scriptstyle #2$};
  \end{tikzpicture}}

\newcommand*{\Corep}[1]{\mathbb{#1}}          







\newcommand*{\nb}{\nobreakdash}
\newcommand*{\Star}{$^*$\nb-}

\newcommand*{\C}{\mathbb C}
\newcommand*{\Z}{\mathbb Z}

\newcommand*{\N}{\mathbb N}

\newcommand*{\T}{\mathbb T}
\newcommand*{\Mat}{\mathbb M}

\newcommand*{\G}[1][G]{\mathbb #1}

\newcommand*{\qSU}{\textup{SU}_q(2)}
\newcommand*{\qU}{\textup{U}_q(2)}

\newcommand*{\Comult}[1][]{\Delta_{#1}}


\newcommand*{\Rep}{\mathrm{Rep}}

\newcommand*{\Bound}{\mathbb B}
\newcommand*{\Comp}{\mathbb K}
\newcommand*{\congto}{\xrightarrow\sim}

\newcommand*{\ima}{\textup i}

\newcommand*{\Cont}{\textup C}

\newcommand*{\Mor}{\textup{Mor}}
\newcommand*{\Id}{\textup{id}}









\newcommand*{\Rmat}{\textup R}

\newcommand*{\Cst}{\textup C^*}


\newcommand*{\Hils}[1][H]{\mathcal{#1}}
\newcommand*{\U}{\mathcal U}

\newcommand*{\defeq}{\mathrel{\vcentcolon=}}

\DeclarePairedDelimiter{\abs}{\lvert}{\rvert}
\DeclarePairedDelimiter{\norm}{\lVert}{\rVert}
\DeclarePairedDelimiterX{\setgiven}[2]{\{}{\}}{#1\,{:}\,\mathopen{}#2}

\newcommand*{\conj}[1]{\overline{#1}}

\DeclareMathOperator{\Endo}{End}

\usepackage{xcolor}

\begin{document}
\title{Braided free orthogonal quantum groups}

\author{Ralf Meyer}
\email{rmeyer2@uni-goettingen.de}
\address{Mathematisches Institut\\
  Georg-August Universität Göttingen\\
  Bunsenstraße 3--5\\
  37073 Göttingen\\
  Germany}

\author{Sutanu Roy}
\email{sutanu@niser.ac.in}
\address{School of Mathematical Sciences\\
 National Institute of Science Education and Research  Bhubaneswar, HBNI\\
 Jatni, 752050\\
 India}

\begin{abstract}
  We construct some braided quantum groups over the circle group.  These
  are analogous to the free orthogonal quantum groups and generalise
  the braided quantum SU(2) groups for complex deformation
  parameter.  We describe their irreducible representations and
  fusion rules and study when they are monoidally equivalent.
\end{abstract}

\subjclass[2000]{81R50}
\keywords{quantum group, braided quantum group, free orthogonal
  quantum group, quantum U(2) group, monoidal equivalence}

 \thanks{The second author was partially supported by INSPIRE faculty award given by D.S.T., Government of India grant no.
 DST/INSPIRE/04/2016/000215}

\maketitle

\section{Introduction}
\label{sec:introduction}

The quantum \(\textup{SU}(2)\) groups for a real deformation
parameter~\(q\) are
archetypes for the theory of compact quantum groups.  These are
generalised in~\cite{Kasprzak-Meyer-Roy-Woronowicz:Braided_SU2} to
the case \(q\in\C^\times\).  The underlying
\(\Cst\)\nb-algebra~\(B\) of \(\qSU\) for
\(q\in\C^\times\) is the universal \(\Cst\)\nb-algebra with two
generators \(\alpha,\gamma\), subject to the relations
\(\alpha^*\alpha + \gamma^*\gamma = 1\), \(\alpha\alpha^* +
\abs{q}^2\gamma^*\gamma = 1\), \(\gamma\gamma^* = \gamma^*\gamma\),
\(\alpha\gamma = \conj{q} \gamma\alpha\) and \(\alpha\gamma^* =
q\gamma^*\alpha\).  The comultiplication no longer takes values
in~\(B\otimes B\), but in a certain twisted tensor product
\(B\boxtimes B\).  This twisted tensor product uses
a parameter \(\zeta\in \T\) and the action of
the circle group~\(\T\) on~\(B\) defined by
\(\varrho_z(\alpha)=\alpha\) and \(\varrho_z(\gamma)=z\gamma\) for all
\(z\in\T\).

For real~\(q\),
\(\qSU\)
is the universal quantum group with a representation~\(S\)
on~\(\C^2\)
where a certain vector in \(\C^2\otimes \C^2\) is
invariant under the tensor product representation \(S\tenscorep S\).
Replacing~\(\C^2\)
by~\(\C^n\)
gives the free orthogonal quantum groups by Van Daele and
Wang~\cite{Daele-Wang:Universal}.
The braided quantum group \(\qSU\)
for complex~\(q\)
has a similar universal property (see
\cite{Kasprzak-Meyer-Roy-Woronowicz:Braided_SU2}*{Theorem 5.4}).
The difference is that~\(\C^2\)
now carries a non-trivial representation of the circle group, which
modifies the formula for \(S\tenscorep S\)
and thus the condition for a vector to be invariant.  In this
article, we are going to construct braided compact quantum
groups over the circle group~\(\T\)
that generalise both the free orthogonal quantum groups and the
braided \(\qSU\)
of~\cite{Kasprzak-Meyer-Roy-Woronowicz:Braided_SU2}.

The data to define them is a triple \((V,\pi,\omega)\)
consisting of a finite-dimensional vector space~\(V\)
with a representation~\(\pi\)
of~\(\T\)
and a \(\T\)\nb-homogeneous
vector \(\omega\in V\otimes V\),
subject to a condition that requires some notation.  Let
\(V_k\subseteq V\)
for \(k\in\Z\)
be the subspace of \(k\)\nb-homogeneous
elements of~\(V\).
Choose bases for each~\(V_k\)
so that a vector \(\omega\in V\otimes V\)
corresponds to a matrix.  Write this matrix as a block matrix
\(\Omega = (\Omega_{i j})\)
according to the decomposition \(V=\bigoplus V_k\).
Assume that~\(\omega\)
is \(d\)\nb-homogeneous
for \(d\in\Z\).
Then \(\Omega_{i j}=0\)
unless \(i+j=d\).
To construct a braided quantum group, we need~\(\Omega\)
to satisfy the following condition: there is a non-zero scalar
\(c\in\C^\times\) with
\begin{equation}
  \label{eq:Xi_condition}
  \conj{\Omega_{i,d-i}} \cdot \Omega_{d-i,i}
  = c\cdot \zeta^{d\cdot i}\cdot 1
  \qquad\text{for all }i\in\Z,
\end{equation}
where~\(1\) denotes the identity matrix of appropriate size
and~\(\zeta\) is the twisting parameter for the braided tensor
product.  Then the matrices~\(\Omega_{i,d-i}\) are invertible for all
\(i\in\Z\),
and \(\conj{c}/c = \zeta^{d^2}\)
is needed for the equations for \(i\) and \(d-i\) to be compatible.
Combining the blocks~\(\Omega_{ij}\) in one matrix~\(\Omega\), we may
rewrite~\eqref{eq:Xi_condition} more briefly as
\begin{equation}
  \label{eq:Xi_condition_2}
  \conj{\Omega}\Omega =c\cdot \pi_{\zeta^{d}}.  
\end{equation}
Equation~\eqref{eq:Xi_condition} is easy to solve for \(i\neq d/2\)
by choosing arbitrary invertible matrices~\(\Omega_{i,d-i}\) for
\(i\in\Z\) with \(i<d/2\) and letting \(\Omega_{d-i,i} \defeq c
\zeta^{d \cdot i} \conj{\Omega_{i,d-i}}^{-1}\).  If \(d\) is even and
\(i=d/2\), then~\eqref{eq:Xi_condition} becomes
\(\conj{\Omega_{d/2,d/2}} \cdot \Omega_{d/2,d/2} = c \cdot
\zeta^{d^2/2}\), which can be solved because \(c \cdot
\zeta^{d^2/2}\) is real if \(\conj{c}/c = \zeta^{d^2}\).  Thus there
is a large parameter space for our new braided quantum groups.
We now fix the deformation parameter~\(\zeta\).

A \emph{braided compact quantum group over~\(\T\)} is a unital
\(\Cst\)\nb-algebra~\(B\) with a continuous \(\T\)\nb-action
\(\beta\colon B\to B\otimes \Cont(\T)\) and a coassociative,
bisimplifiable comultiplication \(\Comult[B]\colon B\to
B\boxtimes_\zeta B\).  Here~\(\boxtimes_\zeta\) is a Rieffel
deformation of the usual spatial tensor product \(B\otimes B\),
where we use the canonical action of~\(\T^2\) on \(B\otimes B\).
We treat this using the monoidal structure on
\(\T\)\nb-\(\Cst\)\nb-algebras discussed
in~\cite{Kasprzak-Meyer-Roy-Woronowicz:Braided_SU2}*{Section~3}.  A
\emph{right representation} of~\(\G\) on~\((V,\pi)\) is a
\(\T\)\nb-invariant unitary \(u\in \U(\Endo(V)\otimes B)\) with
\(\Comult[B](u) = (j_1\otimes \Id)(u)\cdot (j_2\otimes \Id)(u)\),
where \(j_1, j_2\colon B\rightrightarrows B\boxtimes_\zeta B\) are
the two canonical embeddings.  There is a tensor product operation
on representations of~\(B\).
Roughly speaking, the braided orthogonal
quantum group \(A_o(V,\pi,\omega)\) is the universal \(\T\)\nb-braided
quantum group with a representation~\(u\) on~\((V,\pi)\) such that
\(\omega\in V\otimes V\) is invariant for the
representation~\(u\tenscorep u\).  This invariance condition is
equivalent to a linear relation among the matrix entries of \(u\)
and~\(u^*\).  The condition on~\(\omega\)
in~\eqref{eq:Xi_condition} ensures that this
relation is equivalent to its adjoint, so that it does not produce
linear relations among the matrix entries of~\(u\).
It also implies that the representation~\(u\) is irreducible.

We now describe the braided free orthogonal quantum groups
explicitly.  Choose a basis \(e_1,\dotsc,e_n\) for~\(V\) such that
\(\pi_z(e_i) = z^{d_i} e_i\) with \(d_1\le d_2 \le d_3 \le \dotsb
\le d_n\).  Write~\(\omega\)
in this basis as \(\omega = \sum_{i,j=1}^n \omega_{ij} e_i \otimes e_j\).
The \(\Cst\)\nb-algebra of the braided quantum group
\(A_o(V,\pi,\omega)\) is the universal unital \(\Cst\)\nb-algebra with
generators~\(u_{ij}\) for \(1\le i,j\le n\) subject to the relations
\begin{align}
  \label{eq:u_isometry}
  \sum_{k=1}^n u^*_{k i} u_{k j} &= \delta_{i,j},\\
  \label{eq:u_coisometry}
  \sum_{k=1}^n u_{i k} u^*_{j k} &= \delta_{i,j},\\
  \label{eq:xi_invariant}
  \zeta^{d_j d_i} \sum_{k=1}^n \omega_{i k} u_{j k}
  &= \zeta^{d_j (d-d_j)} \sum_{k=1}^n \omega_{k j} u_{k i}^*
\end{align}
for \(1\le i,j\le n\).
Here~\(u_{i j}\) is the \(i,j\)th coefficient of the canonical
representation \(u\in \Mat_n(A_o(V,\pi,\omega))\).  The two relations \eqref{eq:u_isometry}
and~\eqref{eq:u_coisometry} say that~\(u\) is unitary.
Equation~\eqref{eq:xi_invariant}
says that~\(\omega\) is
invariant.  The action~\(\beta\)
of~\(\T\) on~\(A_o(V,\pi,\omega)\) and the comultiplication
\(\Comult\colon A_o(V,\pi,\omega)\to A_o(V,\pi,\omega)\boxtimes_\zeta
A_o(V,\pi,\omega)\) are defined by
\begin{align}
  \label{eq:beta_on_u}
  \beta_z(u_{i k}) &= z^{d_k-d_i} u_{i k},\\
  \label{eq:comultiplication_u}
  \Comult(u_{i k}) &= \sum_{l=1}^n j_1(u_{i l}) j_2(u_{l k}),
\end{align}
where \(j_1,j_2\colon A_o(V,\pi,\omega)\rightrightarrows
A_o(V,\pi,\omega)\boxtimes_\zeta A_o(V,\pi,\omega)\) are the canonical
morphisms to the braided tensor product.  These are the unique
action of~\(\T\) and comultiplication for which~\(u\) is a
representation.

If \(V=\C^2\) and \(d_1=0\), \(d_2=1\), the definition above gives
the braided \(\qSU\) groups
of~\cite{Kasprzak-Meyer-Roy-Woronowicz:Braided_SU2}.
If \(d_i=0\) for \(i=1,\dotsc,n\), then~\(\beta\) is trivial and we get the usual
free orthogonal quantum groups of~\cite{Daele-Wang:Universal}.  As
an interesting new case, let \(n=2 m\) be even and let \(d_i=0\) for
\(1\le i\le m\) and \(d_i=1\) for \(m< i\le 2 m\).  Let \(d=1\).
Then a \(d\)\nb-homogeneous vector~\(\omega\) is equivalent to two
\(m\times m\)-matrices \(\Omega_0\), \(\Omega_1\), which form the
block matrix
\[
\begin{pmatrix}
  0&\Omega_1\\\Omega_0&0
\end{pmatrix}.
\]
Our construction requires two conditions, namely, \(\conj{\Omega_0}
\cdot \Omega_1 = c\) and \(\conj{\Omega_1} \cdot \Omega_0 = c\zeta\).
Thus~\(\Omega_0\) may be an arbitrary invertible \(m\times m\)-matrix,
and \(\Omega_1 = c \conj{\Omega_0}^{-1}\), \(\zeta = \conj{c}/c\).  If
\(m=1\), this gives the braided \(\qSU\) groups
of~\cite{Kasprzak-Meyer-Roy-Woronowicz:Braided_SU2}.  If~\(c\) is
real or, equivalently, if \(\zeta=1\), then it gives free orthogonal
quantum groups in the usual sense.

Banica~\cite{Banica:Rep_On} has described the
irreducible representations of the free orthogonal quantum groups
and their tensor products, showing that they behave like those of
the Lie group SU(2).  His result was made more precise by Bichon, De
Rijdt and Vaes~\cite{Bichon-de_Rijdt-Vaes:Ergodic}, who established
that any free orthogonal quantum group is monoidally equivalent to
\(\qSU\) for a specific~\(q\).  We prove analogues of these results
for our braided free orthogonal quantum groups.

\begin{theorem}
  \label{the:irrep_tensor}
  If~\(d\) is even, then the braided orthogonal quantum
  group~\(A_o(V,\pi,\omega)\) has irreducible
  representations~\(r_{(k,l)}\) for \(k\in\N\), \(l\in\Z\), such
  that any irreducible representation is unitarily equivalent to
  exactly one of these and \(\conj{r_{(k,l)}} = r_{(k,-l)}\) and
  \[
  r_{(a,b)} \otimes r_{(m,k)} \cong
  r_{(a+m,b+k)} \oplus r_{(a+m-2,b+k)} \oplus r_{(a+m-4,b+k)} \oplus \dotsb
  \oplus r_{(\abs{a-m},b+k)}.
  \]
  If~\(d\) is odd, then a similar statement holds, but we only allow
  those representations where \(a-b\) is even.
\end{theorem}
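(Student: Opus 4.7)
My strategy is to adapt Banica's argument for the free orthogonal quantum groups to the braided setting by writing every irreducible as a twist $r_{(k,l)}=r_k\otimes\chi_l$, where $\{r_k\}_{k\in\N}$ is an SU(2)\nb-type family built from the fundamental representation~$u$, and $\chi_l\colon z\mapsto z^l$ is the one-dimensional representation of~$\G$ with carrier~$\C$ of $\T$\nb-weight~$l$ and trivial coaction. Since $\chi_l\otimes\chi_m=\chi_{l+m}$ and $\conj{\chi_l}=\chi_{-l}$, tensoring with~$\chi_l$ merely shifts the $\Z$\nb-grading, so the claimed fusion rules and the conjugation formula $\conj{r_{(k,l)}}=r_{(k,-l)}$ reduce to (i) a classical SU(2)\nb-type Clebsch--Gordan decomposition of $r_a\tenscorep r_m$ and (ii) self-duality of each~$r_k$ up to a grading twist.

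The core induction proceeds as follows. The excerpt already guarantees that the fundamental representation~$u$ is irreducible and that~$\omega$ spans the one-dimensional intertwiner space $\mathrm{Hom}(\chi_d,\,u\tenscorep u)$. Setting $r_0=\mathbf 1$ and $r_1=u$, I define $r_{k+1}$ as the orthogonal complement in $u\tenscorep r_k$ of the canonical copy of $r_{k-1}\otimes\chi_d$ coming from~$\omega$, and prove the recursion $u\tenscorep r_k\cong (r_{k-1}\otimes\chi_d)\oplus r_{k+1}$ with $r_{k+1}$ irreducible by a Temperley--Lieb/Jones--Wenzl argument: the intertwiners built from~$\omega$, its dual~$\conj\omega$, and the unit and counit of the rigid duality on~$u$ generate a copy of the Temperley--Lieb category inside the representation category, with loop parameter determined by~$\mathrm{tr}(\conj\Omega\,\Omega)$, and the braiding parameter~$\zeta$ enters the compositions in the same way as in the braided $\qSU$ computation of~\cite{Kasprzak-Meyer-Roy-Woronowicz:Braided_SU2}. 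A Peter--Weyl argument for braided compact quantum groups then shows that the $r_{(k,l)}$ exhaust the spectrum, because their matrix coefficients form a unital $*$\nb-subalgebra of~$B$ containing the generators~$u_{i j}$ and hence dense in~$B$.

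For the parity condition when~$d$ is odd, the point is that~$\omega$ has $\T$\nb-weight~$d$, so at each step of the induction the new summand~$r_{k+1}$ differs from~$u\tenscorep r_k$ by a factor of~$\chi_d$; after the relabelling $r_{(k,l)}=r_k\otimes\chi_l$, this restricts the admissible values of~$l$ for a given~$k$ to a single residue class modulo~$2$, which works out to $l\equiv k\pmod 2$ when~$d$ is odd. The main obstacle will be the Temperley--Lieb verification inside the braided tensor category: the twisted multiplication in $B\boxtimes_\zeta B$ introduces $\zeta$\nb-factors at each composition of intertwiners that must cancel cleanly to reproduce the undeformed Temperley--Lieb relations. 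An alternative route that bypasses much of this combinatorics is to establish monoidal equivalence of $A_o(V,\pi,\omega)$ with a braided $\qSU$ via a linking bi-Galois object, generalising Bichon--De Rijdt--Vaes, and then transport their irreducible classification and fusion rules; the nontrivial input there is to identify the correct deformation parameter from the data $(V,\pi,\omega)$.
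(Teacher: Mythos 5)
Your overall architecture (irreducibles $r_{(k,l)}=r_k\otimes\chi_l$, an $\mathrm{SU}(2)$\nb-type recursion for the $r_k$, and a parity constraint coming from the weight of~$\omega$ when~$d$ is odd) matches the shape of the answer, but the central step is not carried out and, as stated, does not obviously go through in the braided category. To prove that $r_{k+1}$, defined as the complement of $r_{k-1}\otimes\chi_d$ in $u\tenscorep r_k$, is irreducible, and that the $r_{(k,l)}$ are pairwise inequivalent and exhaustive, you need an \emph{upper} bound on the intertwiner spaces $\mathrm{Hom}(u^{\tenscorep k},u^{\tenscorep l})$: showing that $\omega$, its dual and the duality morphisms \emph{generate} a copy of Temperley--Lieb is the easy inclusion, and the Jones--Wenzl machinery only yields the classification once you know the intertwiners are \emph{exactly} Temperley--Lieb. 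In Banica's proof for $A_o(F)$ that upper bound comes from Woronowicz's Tannaka--Krein theorem combined with the universal property. The paper explicitly remarks that Woronowicz's Tannaka--Krein theorem does not apply to the braided representation category, because the intertwiner $z\tenscorep t\cong t\tenscorep z$ is a non-trivial braiding operator rather than the flip, so the forgetful functor is not the standard fibre functor. Your proposal does not say where the dimension bound comes from in the braided setting, and that is precisely where the argument can fail.

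The paper's resolution is to avoid working in the braided category altogether: representations of $A_o(V,\pi,\omega)$ are equivalent to representations of the bosonisation $(C,\Comult[C])$, which is an ordinary compact matrix quantum group with fundamental representation $z\oplus t$; for $d=0$ one identifies $C\cong A_o(F)\rtimes_\alpha\Z$ with $F=\Omega^\top$, so that the injective Hopf \Star{}homomorphism $\varphi\colon A_o(F)\to C$ gives a fully faithful tensor functor, the crossed-product grading shows that $z^a\tenscorep\varphi_*(\varrho_1)$ and $z^b\tenscorep\varphi_*(\varrho_2)$ admit no intertwiners for $a\neq b$, and Banica's theorem is imported wholesale rather than re-proved. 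Even~$d$ is reduced to $d=0$ by the degree-shift isomorphism, and odd~$d$ by passing to a double cover of~$\T$, which is exactly where the condition that $a-b$ be even arises. If you want to keep your direct approach, you must either supply a Tannaka--Krein argument valid for braided compact quantum groups or reduce to the bosonisation at the point where you need the dimension bound --- at which stage you have essentially reconstructed the paper's proof. Your alternative route via a bi-Galois object is closer in spirit to the proof of Theorem~\ref{the:monoidal_equivalence}, but it presupposes the identification of the representation category that is at issue here.
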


The fusion rules above are those of the group
\(\mathrm{SU}(2) \times \T\) in the even case, and those of the
group \(\mathrm{U}(2)\) in the odd case (see
\cite{Mrozinski:Qgrp_GL2_rep_type}*{Theorem 6.3}).  The extra circle
occurs because representations of a braided quantum group are
equivalent to representations of an associated quantum group,
namely, its semidirect product with the quantum group over which it
is braided (see \cite{Meyer-Roy:Braided_mu}*{Theorem~3.4}).  This is
a \(\Cst\)\nb-algebraic variant of the bosonisation of Radford and
Majid (see \cites{Radford:Hopf_projection, Majid:CrsdPrd_braidgrp}).
Therefore, we also change our notation from ``semidirect product''
to ``bosonisation''.  The classical case of bosonisation is a
semidirect product of groups.  The relevant classical example for us
is that \(\mathrm{U}(2)\) is a semidirect product of
\(\mathrm{SU}(2)\subseteq \mathrm{U}(2)\) and the circle
group~\(\T\), embedded into~\(\mathrm{U}(2)\) through
\(\iota\colon \T\to \mathrm{U}(2)\),
\(x\mapsto \textup{diag}(x,1)\).

The bosonisation for \(\qSU\) is isomorphic to \(\qU\) for the
quantum~\(\mathrm{U}(2)\) groups of \cites{Zhang:Uq2,
  Zhang-Zhao:Uq2}.  If~\(q\) is real, then \(\qU\) is a special case
of the groups \(\mathrm{U}_q(n)\), which go back
to~\cite{Noumi-Yamada-Mimachi:Rep_GLqn}.  If
\(q=\exp(\ima\vartheta)\) has absolute value~\(1\), then \(\qU\)
specialises to the \(\vartheta\)\nb-deformation
\(\mathrm{U}_\vartheta\) of~\(\mathrm{U}(2)\) defined by Connes and
Dubois-Violette~\cite{Connes-Dubois-Violette:NC_Spherical}.

In particular, when we view the usual \(\qSU\) for real~\(q\) as a
braided quantum group over~\(\T\), we are implicitly dealing with
the corresponding~\(\qU\), which is its bosonisation. 
We describe the representation category of
\(A_o(V,\pi,\omega)\) using the representation category of an ordinary
orthogonal quantum group.  Using the monoidal equivalences proven
in~\cite{Bichon-de_Rijdt-Vaes:Ergodic}, this implies a monoidal
equivalence between \(A_o(V,\pi,\omega)\) and either \(\qSU\times \T\)
or \(\qU\) depending on the parity of~\(d\):

\begin{theorem}
  \label{the:monoidal_equivalence}
  There is a unique \(q\in [-1,1]\setminus\{0\}\) such that the
  representation category of \(A_o(V,\pi,\omega)\) is equivalent to the
  representation category of\/ \(\qSU\times \T\) if~\(d\) is even and
  to the representation category of\/ \(\qU\) if~\(d\) is odd.
\end{theorem}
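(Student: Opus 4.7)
The plan is to combine Theorem~\ref{the:irrep_tensor} with the bosonisation equivalence of \cite{Meyer-Roy:Braided_mu} and the monoidal classification of Bichon, De Rijdt and Vaes~\cite{Bichon-de_Rijdt-Vaes:Ergodic}. First, pass from the braided quantum group $A_o(V,\pi,\omega)$ to its bosonisation~$\G$, an ordinary compact quantum group containing~$\T$ as a quantum subgroup; by \cite{Meyer-Roy:Braided_mu}*{Theorem~3.4}, the $\Cst$\nb-tensor category $\Rep(A_o(V,\pi,\omega))$ is equivalent to $\Rep(\G)$. On the reference side, the bosonisation of $\qSU$ for real~$q$ viewed with trivial $\T$\nb-grading is $\qSU\times\T$, while the bosonisation of the braided $\qSU$ with the non\nb-trivial grading of \cite{Kasprzak-Meyer-Roy-Woronowicz:Braided_SU2} is $\qU$, as noted in the introduction. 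Thus it suffices to exhibit a monoidal equivalence of ordinary compact quantum groups between $\G$ and $\qSU\times\T$ (even $d$) or between $\G$ and $\qU$ (odd $d$).

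Second, Theorem~\ref{the:irrep_tensor} identifies the fusion ring of $\Rep(\G)$ with that of $\mathrm{SU}(2)\times\T$ or $\mathrm{U}(2)$, respectively. In the even case I would take the self-conjugate simple object $X\defeq r_{(1,0)}$: it generates an $\mathrm{SU}(2)$\nb-fusion subcategory, and the invariance of~$\omega$ gives a canonical morphism $\mathbf{1}\to X\otimes X$ solving the conjugate equations, with scalar data controlled by~\eqref{eq:Xi_condition_2}. In the odd case $r_{(1,l)}$ is never self-conjugate (since $\conj{r_{(k,l)}} = r_{(k,-l)}$ with $l$ forced to be odd), so the generator is the fundamental two\nb-dimensional representation~$u$ itself, which is of ``free unitary'' type; the relevant monoidal classification is then the free unitary variant in~\cite{Bichon-de_Rijdt-Vaes:Ergodic}. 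In both cases $\omega$ furnishes an explicit duality morphism, and \eqref{eq:Xi_condition_2} ensures that the standard solution of the conjugate equations is proportional to the canonical one.

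Third, the classification result says that a rigid $\Cst$\nb-tensor category with $\mathrm{SU}(2)$ fusion rules and generating two\nb-dimensional object of quantum dimension $\abs{q}+\abs{q}^{-1}$ is monoidally equivalent to $\Rep(\qSU)$ for a unique $q\in[-1,1]\setminus\{0\}$. In our setting, the quantum dimension of the generator equals $\mathrm{Tr}(\conj{\Omega}\Omega)/\abs{c}$ up to the normalisation fixed by $\conj{c}/c = \zeta^{d^2}$, which pins down $\abs{q}$; the sign of $q$ is detected by whether the standard solution of the conjugate equations squares to $+1$ or $-1$, a datum read off from~$\Omega$. Gluing back the $\T$\nb-factor using compatibility of the bosonisation with monoidal equivalences yields the desired equivalence $\G\simeq\qSU\times\T$ or $\G\simeq\qU$. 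The main obstacle I anticipate is the odd case: identifying the correct two\nb-dimensional generator, computing its quantum dimension and sign explicitly in terms of~$\Omega$, and verifying that the monoidal equivalence on the $\mathrm{SU}(2)$\nb-like piece lifts to an equivalence with~$\qU$ rather than only modulo the $\T$\nb-factor.
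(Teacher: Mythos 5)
Your opening move --- passing to the bosonisation via \cite{Meyer-Roy:Braided_mu}*{Theorem~3.4} --- coincides with the paper's strategy, but from there your argument has two genuine gaps. The more serious one is the odd case. You propose to treat the generator as being ``of free unitary type'' and to invoke the free unitary variant of \cite{Bichon-de_Rijdt-Vaes:Ergodic}. That classification applies to categories with the \emph{free} (non-commutative) fusion semiring of $A_u(F)$, whereas here the relation $t\cdot F = z^d F\cdot\conj{t}$ forces $\conj{t}\cong z^{-d}\tenscorep t$, so the fusion ring is commutative and of $\mathrm{U}(2)$ type --- exactly as Theorem~\ref{the:irrep_tensor} asserts. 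So the classification result you want to cite simply does not apply, and the step fails. (Also, $u$ acts on $V\cong\C^n$, not on $\C^2$.) The paper avoids this entirely: for odd $d$ it pulls back along the double cover $z\mapsto z^2$ of $\T$ to make the degree even, reduces to $d=0$ by Lemma~\ref{lem:shift_degrees}, and then identifies $\Rep(A_o(V,\pi,\omega))$ with the full monoidal subcategory of $\Rep(\qSU)\times\Rep(\Cont(\T))$ on the objects $z^a\tenscorep r_b$ with $b-a$ even, which is the same subcategory that realises $\Rep(\qU)$.

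The second gap is the phrase ``gluing back the $\T$\nb-factor using compatibility of the bosonisation with monoidal equivalences.'' Fusion rules of $\mathrm{SU}(2)\times\T$ do not by themselves give a monoidal equivalence with $\Rep(\qSU\times\T)$: one must show that the $\Z$\nb-graded tensor category whose degree-zero part is $\Rep(A_o(F))$ is monoidally equivalent to the \emph{product} category, even though the intertwiner $z\tenscorep t\cong t\tenscorep z$ is the non-trivial braiding operator $\pi_\zeta$ rather than the flip. This is where the paper does its real work (Lemma~\ref{lem:orthogonal_pieces_in_rep_cat} and the naturality argument for the braiding that follows it), and it is also why the conclusion is only a monoidal equivalence and not an isomorphism of quantum groups --- see the Remark following that discussion. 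Concretely, the paper constructs an explicit injective Hopf \Star{}homomorphism $\varphi\colon A_o(F)\to C$ with $F=\Omega^\top$, identifies $C\cong A_o(F)\rtimes\Z$, checks that the associators and tensor product functors restrict to the standard ones on each graded piece, and only then invokes \cite{Bichon-de_Rijdt-Vaes:Ergodic} for the orthogonal (not unitary) case to replace $A_o(F)$ by $\qSU$. Your idea of extracting $q$ from a duality morphism built from $\omega$ and from $\mathrm{Tr}(\conj{\Omega}\Omega)$ is in the right spirit --- it is essentially how $q$ is determined in \cite{Bichon-de_Rijdt-Vaes:Ergodic} --- but as written it does not substitute for these missing steps.
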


\section{The universal property}
\label{sec:construct}

Let \(\G=(B,\beta,\Comult)\) be a braided quantum group over~\(\T\).
A representation of~\(\G\) on~\(\C^n\) consists of a representation
of~\(\T\) on~\(\C^n\) and a unitary \(u\in \Mat_n(\C)\otimes B\)
that is \(\T\)\nb-invariant and satisfies a braided form of the
corepresentation condition (see, for instance,
\cites{Kasprzak-Meyer-Roy-Woronowicz:Braided_SU2,
  Meyer-Roy:Braided_mu}).  In particular, any representation
of~\(\T\) and \(u=1\) form a ``trivial'' representation of~\(\G\).
Let~\(\C_\ell\) for \(\ell\in\Z\) be~\(\C\) with the representation
\(z\mapsto z^\ell\) of~\(\T\) and with the representation \(u=1\)
of~\(\G\).  In the following, we will identify
\(\Mat_n(\C)\otimes B\) with \(\Mat_n(B)\) and write elements as
\((b_{ij})_{1\le i,j\le n}\) or
\(\sum_{1\le i,j\le n} e_{i j} \otimes b_{i j}\).

Let \(u=(u_{ij})_{1\le i,j\le n}\in\Mat_n(B)\)
be a representation of~\(\G\)
on \(V=\C^n\)
with underlying representation~\(\pi\)
of~\(\T\).
A \emph{\(d\)\nb-homogeneous
  \(\G\)\nb-invariant
  vector} is defined as a vector \(\omega\in V\otimes V\)
such that the map \(\C_d \to V\otimes V\),
\(c\mapsto c\cdot\omega\),
is an intertwining operator for the representations
of both \(\T\) and~\(\G\).
Let \(e_1,\dotsc,e_n\)
be an eigenbasis for~\(\pi\),
that is,
there are \(d_1,\dotsc,d_n\in\Z\)
with \(\pi_z(e_i) = z^{d_i} \cdot e_i\) for \(i=1,\dotsc,n\).
We order the basis so that
\(d_1\le d_2 \le d_3 \le \dotsb \le d_n\).
Let \(\omega\in V\otimes V\)
and write \(\omega = \sum_{i,j=1} \omega_{i,j} e_i \otimes e_j\).
The vector~\(\omega\)
is \(d\)\nb-homogeneous --
that is, the resulting map \(\C_d \to V\otimes V\)
is \(\T\)\nb-equivariant -- if and only if \(d_i + d_j = d\)
for all \(i,j\) with \(\omega_{i,j}\neq 0\).
We assume this from now on.

\begin{proposition}
  \label{pro:universal_property}
  The matrix \((u_{i j})_{1\le i,j\le n}\) satisfies the relations
  \eqref{eq:u_isometry}--\eqref{eq:comultiplication_u} if and only
  if it
  is a representation of~\(\G\) on~\(V\) with underlying
  representation~\(\pi\) of~\(\T\) and~\(\omega\)
  is a \(d\)\nb-homogeneous
  \(\G\)\nb-invariant vector in \(V\otimes V\).
\end{proposition}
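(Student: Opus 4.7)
\smallskip

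\noindent\emph{Proof plan.}
The plan is to split the five displayed relations into two blocks and verify each by unpacking definitions: the block \eqref{eq:u_isometry}--\eqref{eq:comultiplication_u} should encode ``\(u\) is a representation of \(\G\) on \((V,\pi)\)'', while \eqref{eq:xi_invariant} should encode ``\(\omega\) is \(\G\)-invariant''.  The second block is the harder part.

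For the first block I write \(u=\sum_{i,j} e_{ij}\otimes u_{ij}\in\Endo(V)\otimes B\).  Then \eqref{eq:u_isometry} and \eqref{eq:u_coisometry} are the matrix-entry forms of \(u^*u=1\) and \(uu^*=1\), so together they are equivalent to unitarity of~\(u\).  The combined \(\T\)-action on \(\Endo(V)\otimes B\) uses conjugation by~\(\pi\) (which sends \(e_{ij}\mapsto z^{d_i-d_j}e_{ij}\)) and~\(\beta\) on~\(B\); invariance of~\(u\) under this action is therefore equivalent to \(\beta_z(u_{ij})=z^{d_j-d_i}u_{ij}\), that is \eqref{eq:beta_on_u}.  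Finally, \eqref{eq:comultiplication_u} is the matrix-entry form of the corepresentation condition \((\Id\otimes\Comult)(u)=(\Id\otimes j_1)(u)\cdot(\Id\otimes j_2)(u)\) in \(\Endo(V)\otimes (B\boxtimes_\zeta B)\), where the right-hand side uses the braided multiplication.

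For the second block, assume that \(u\) is a representation, so that \(u\tenscorep u\) is a representation on \(V\otimes V\).  Viewing \(\omega\) as a morphism \(\C_d\to V\otimes V\), its \(\T\)-equivariance is exactly the \(d\)-homogeneity assumption; \(\G\)-equivariance is the condition \((u\tenscorep u)(\omega\otimes 1_B)=\omega\otimes 1_B\) in \(V\otimes V\otimes B\).  Written out in matrix entries, this gives, for each pair \((i,j)\), a \emph{quadratic} relation in the \(u_{mn}\)'s whose twist factors come from the braiding in the tensor product of representations.  To match the linear form of \eqref{eq:xi_invariant} (which involves both \(u\) and \(u^*\)), I would multiply the quadratic identity by a suitable \(u^*_{mi}\) and sum over~\(i\), collapsing one sum via \eqref{eq:u_isometry}; the remaining \(\zeta\)-power is precisely what arises from braiding \(u^*_{mi}\) past the second factor, so the exponents \(\zeta^{d_j d_i}\) and \(\zeta^{d_j(d-d_j)}\) in \eqref{eq:xi_invariant} should emerge after using \(d_i+d_k=d\) and \(d_k+d_j=d\) inside the two sums.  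For the converse direction, I would multiply \eqref{eq:xi_invariant} by \(u\) and sum, using \eqref{eq:u_coisometry} to invert the previous reduction and recover the bilinear invariance identity.

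The main obstacle will be the bookkeeping of \(\zeta\)-exponents.  Once the braided tensor product representation \(u\tenscorep u\) is written explicitly in components, the reduction by \(u^*\) is mechanical, but matching the resulting exponents to the specific factors \(\zeta^{d_j d_i}\) and \(\zeta^{d_j(d-d_j)}\) in \eqref{eq:xi_invariant} requires care.  A useful consistency check throughout is that both sides of \eqref{eq:xi_invariant} lie in the same \(\T\)-homogeneous component of~\(B\) of weight \(d-d_i-d_j\), which constrains the allowed exponents.
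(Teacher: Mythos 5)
Your plan coincides with the paper's proof: the first three relations are unpacked from unitarity, \(\T\)\nb-invariance and the corepresentation identity exactly as you describe, and the invariance of \(\omega\) is turned into the linear relation \eqref{eq:xi_invariant} by passing one leg of \(u\tenscorep u\) to the other side as an adjoint — the paper does this at the operator level, rewriting \(\Braiding{\Hils[L]}{V}_{23} u_{12}\Dualbraiding{V}{\Hils[L]}_{23} u_{23}\omega_{12}=\omega_{12}\) as \(u_{23}\omega_{12}=\Braiding{\Hils[L]}{V}_{23} u_{12}^*\Dualbraiding{V}{\Hils[L]}_{23}\omega_{12}\) and then comparing coefficients. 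The one point to watch in your component-wise version is that the ``suitable'' multiplier must be the braided adjoint \(\zeta^{d_j d_i}u^*_{i m}\) rather than the bare \(u^*_{i m}\): the quadratic identity carries an \(i\)\nb-dependent phase \(\zeta^{-d_j d_i}\), so without this correction \eqref{eq:u_isometry} does not collapse the sum — taking the adjoint of the whole unitary \(\Braiding{\Hils[L]}{V}_{23} u_{12}\Dualbraiding{V}{\Hils[L]}_{23}\), as the paper does, builds this phase in automatically.
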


\begin{proof}
  A representation must be unitary, \(\T\)\nb-invariant and
  satisfy
  \[
  \Comult(u) = (j_1\otimes \Id)(u) \cdot (j_2\otimes \Id)(u).
  \]
  Being unitary says that \(u^* u = 1\) and \(u u^* = 1\), which
  translates to the relations \eqref{eq:u_isometry}
  and~\eqref{eq:u_coisometry} for the coefficients~\(u_{i j}\).  The
  action~\(\Pi\) on~\(\Mat_n(\C)\) induced by~\(\pi\) is
  \(\Pi_z(e_{i j}) = z^{d_i - d_j} e_{i j}\).  Thus
  \(u = \sum e_{i j} \otimes u_{i j}\) is \(\T\)\nb-invariant if and
  only if the \(\T\)\nb-action~\(\beta\) on~\(B\) satisfies
  \(\beta_z(u_{i j}) = z^{d_j - d_i} u_{i j}\), which
  is~\eqref{eq:beta_on_u}.  Equation~\eqref{eq:comultiplication_u}
  is equivalent to
  \(\Comult(u) = (j_1\otimes \Id)(u)\cdot (j_2\otimes \Id)(u)\).
  Thus these four equations say exactly that~\(u\) is a
  representation of~\(\G\).  It remains to show that the
  \(\G\)\nb-invariance of~\(\omega\) is equivalent
  to~\eqref{eq:xi_invariant}.  We have already assumed that the map
  \(\C_d\to V\otimes V\), \(c\mapsto c\omega\), is
  \(\T\)\nb-equivariant.  We must describe when this map is
  \(\G\)\nb-equivariant for the tensor product representation
  of~\(\G\) on \(V\otimes V\).

  We represent~\(B\)
  faithfully on a Hilbert space to explain the definition of the
  tensor product for representations of braided quantum groups.
  Let~\(\Hils[L]\) be a separable Hilbert space with a continuous
  representation~\(\varrho\) of~\(\T\) and let \(B\hookrightarrow
  \Bound(\Hils[L])\) be a faithful, \(\T\)\nb-equivariant
  representation.  There is an orthonormal basis
  \((\lambda_m)_{m\in\N}\) for~\(\Hils[L]\) consisting of
  eigenvectors for the \(\T\)\nb-action, that is,
  \(\varrho_z(\lambda_m) = z^{l_m} \lambda_m\) with some \(l_m\in\N\).
  Since the representation of~\(B\) is \(\T\)\nb-equivariant,
  \begin{align*}
    \varrho_z(u_{ij} \lambda_m)
    &= \beta_z(u_{ij}) \varrho_z(\lambda_m)
    = z^{d_j-d_i + l_m} u_{ij} \lambda_m,\\
    \varrho_z(u^*_{ij} \lambda_m)
    &= \beta_z(u^*_{ij}) \varrho_z(\lambda_m)
    = z^{d_i-d_j + l_m} u_{ij}^* \lambda_m
  \end{align*}
  for all \(1\le i,j\le n\) and \(m\in\N\).

  Fix \(\zeta\in\T\) and define the \(\Rmat\)\nb-matrix on~\(\T\) by
  \(\Z\times\Z \ni (l,m)\to\zeta^{l m} \in\T\).  The associated
  braiding unitary \(\Braiding{\Hils[L]}{V}\colon
  \Hils[L]\otimes V \to V\otimes\Hils[L]\)
  and its inverse \(\Dualbraiding{V}{\Hils[L]}\colon
  V\otimes\Hils[L] \to \Hils[L]\otimes V\)
  act on the basis by
  \begin{equation}
    \label{eq:Braiding_L_V}
    \Braiding{\Hils[L]}{V}(\lambda_m\otimes e_i)
    = \zeta^{d_i\cdot l_m} e_i\otimes\lambda_m,
    \qquad
    \Dualbraiding{V}{\Hils[L]}(e_i \otimes \lambda_m)
    = \zeta^{-d_i\cdot l_m} \lambda_m\otimes e_i.
  \end{equation}
  The tensor product representation \(u\tenscorep u\) is the
  preimage in \(\Mat_{n^2}(B)\) of the unitary
  \begin{equation}
    \label{eq:tens_u}
    u\tenscorep u\defeq
    \Braiding{\Hils[L]}{V}_{23} u_{12}
    \Dualbraiding{V}{\Hils[L]}_{23} u_{23}
  \end{equation}
  on \(V\otimes V\otimes\Hils[L]\).  Thus the
  \(\G\)\nb-invariance of~\(\omega\) is equivalent to
  \[
  \Braiding{\Hils[L]}{V}_{23} u_{12}
  \Dualbraiding{V}{\Hils[L]}_{23} u_{23}
  \omega_{12} = \omega_{12}
  \]
  as operators \(\C_d \otimes \Hils[L] \to V\otimes V\otimes \Hils[L]\).
  This is equivalent to
  \begin{equation}
    \label{eq:equiv_invcond}
    u_{23}\omega_{12}
    = \bigl(\Braiding{\Hils[L]}{V}_{23} u_{12}
    \Dualbraiding{V}{\Hils[L]}_{23}\bigr)^* \omega_{12}
    = \Braiding{\Hils[L]}{V}_{23} u_{12}^*
    \Dualbraiding{V}{\Hils[L]}_{23} \omega_{12}.
  \end{equation}
  We use that \(u_{ki}^* \lambda_m\) is \(\T\)\nb-homogeneous of
  degree \(d_k-d_i+l_m\) to compute
  \begin{align*}
    \bigl(\Braiding{\Hils[L]}{V}_{23} u_{12}
    \Dualbraiding{V}{\Hils[L]}_{23}\bigr)^*
    e_k\otimes e_j\otimes\lambda_m
    &= \zeta^{-d_j l_m} \Braiding{\Hils[L]}{V}_{23}
    u^*(e_k\otimes\lambda_m)\otimes e_j\\
    &= \zeta^{-d_j l_m} \Braiding{\Hils[L]}{V}_{23}
    \biggl(\sum_{i=1}^n e_i\otimes u_{k i}^*(\lambda_m)\otimes e_j\biggr)\\
    &= \sum_{i=1}^n \zeta^{-d_j l_m+d_j(d_k-d_i+l_m)} e_i\otimes
    e_j\otimes u_{k i}^*(\lambda_m)\\
    &= \sum_{i=1}^n \zeta^{d_j(d_k-d_i)} e_i\otimes e_j\otimes
    u_{k i}^*(\lambda_m).
  \end{align*}
  As a result, the invariance condition~\eqref{eq:equiv_invcond}
  becomes
  \[
  \sum_{i,j,k=1}^n e_i\otimes e_j\otimes u_{jk} \omega_{ik}
  = \sum_{i,j,k=1}^n e_i\otimes e_j\otimes u_{ki}^*
  \zeta^{d_j(d_k-d_i)}\omega_{k j}.
  \]
  Comparing the coefficient of each summand \(e_i\otimes e_j\)
  gives~\eqref{eq:xi_invariant} because \(d_k + d_j = d\)
  whenever \(\omega_{k j}\neq0\)
  by the homogeneity of~\(\omega\).
\end{proof}

Assume the matrix \(\tilde{\omega}\defeq (\omega_{ij}\zeta^{d_i d_j})\)
to be invertible.  Multiplying both sides in~\eqref{eq:xi_invariant}
by~\((\tilde{\omega}^{-1})_{j l}\) and summing over~\(j\) gives
\begin{equation}
  \label{eq:xi_invariant2}
  \sum_{j,k=1}^n \zeta^{d_j d_i} u_{jk} \omega_{ik} (\tilde{\omega}^{-1})_{jl}
  = \sum_{j,k=1}^n \zeta^{d_j d_k} u_{ki}^* \omega_{kj} (\tilde{\omega}^{-1})_{jl}
  = \sum_{k=1}^n u_{ki}^* \delta_{k,l}
  = u_{l i}^*.
\end{equation}
Taking adjoints in~\(B\)
and substituting \((s,t)\) for \((j,k)\) then gives
\begin{equation}
  \label{eq:xi_invariant3}
  \sum_{s,t=1}^n \zeta^{-d_s d_i}
  \conj{\omega_{i t} (\tilde{\omega}^{-1})_{s l}} u^*_{s t}
  = u_{l i}.
\end{equation}
Now plug~\eqref{eq:xi_invariant2} for \(l=s\),
\(i=t\)
into~\eqref{eq:xi_invariant3}.  This gives the following linear
relation among the generators~\(u_{j k}\):
\begin{equation}
  \label{eq:lin_rel_u}
  u_{li} = \sum_{j,k,s,t=1}^n \zeta^{-d_s d_i + d_j d_t}
  \conj{\omega_{i t} (\tilde{\omega}^{-1})_{s l}} u_{j k} \omega_{t k}
  (\tilde{\omega}^{-1})_{j s}.
\end{equation}
We want the generators~\((u_{i j})\)
of our braided quantum group to be linearly independent.  So the
linear relation~\eqref{eq:lin_rel_u} should be trivial.
Equivalently,
\begin{equation}
  \label{eq:triv_rel_u1}
  \sum_{s,t=1}^n \zeta^{-d_s d_i + d_j d_t}
  \conj{\omega_{i t} (\tilde{\omega}^{-1})_{s l}} \omega_{t k}
  (\tilde{\omega}^{-1})_{j s}
  = \delta_{j,l} \delta_{i,k}.
\end{equation}

\begin{lemma}
  \label{lem:Xi_condition}
  The relation~\eqref{eq:triv_rel_u1} is equivalent
  to~\eqref{eq:Xi_condition}.
\end{lemma}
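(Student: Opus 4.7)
The plan is to exploit the $d$\nb-homogeneity of~$\omega$ (and hence of~$\tilde\omega^{-1}$) to rewrite~\eqref{eq:triv_rel_u1} in block-matrix form, then observe that the resulting equation is separable and argue that the involved blocks must therefore be scalar matrices.

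First I would use that $\omega$ and $\tilde\omega^{-1}$ are block anti-diagonal -- their $(a,b)$ block vanishes unless $a+b=d$ -- so the summand in~\eqref{eq:triv_rel_u1} is nonzero only when $d_i = d_k$ and $d_j = d_l$; call the common degrees $a$ and $b$, so $d_t = d-a$ and $d_s = d-b$. Under these constraints
\[
\zeta^{-d_s d_i + d_j d_t} = \zeta^{-(d-b)a+b(d-a)} = \zeta^{d(b-a)}.
\]
The $t$-sum then collapses to the $(i,k)$ entry of the block product $X_a \defeq \conj{\Omega_{a,d-a}}\,\Omega_{d-a,a}$. Writing $\tilde\Omega_{a,d-a} = \zeta^{a(d-a)}\Omega_{a,d-a}$ gives $(\tilde\Omega^{-1})_{d-a,a} = \zeta^{-a(d-a)}(\Omega_{a,d-a})^{-1}$; the two $\zeta$-factors entering the $s$-sum are complex conjugates and cancel because $\abs{\zeta}=1$. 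Thus the $s$-sum equals the $(j,l)$ entry of $Y_b \defeq (\Omega_{d-b,b})^{-1}\,\conj{(\Omega_{b,d-b})^{-1}}$, and~\eqref{eq:triv_rel_u1} is equivalent to
\[
\zeta^{d(b-a)}\,(X_a)_{ik}\,(Y_b)_{jl} = \delta_{ik}\,\delta_{jl}
\]
for all $a,b$ in the spectrum of~$\pi$ and all $i,k\in V_a$, $j,l\in V_b$.

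The key step is a separability argument. Setting $a=b$ and letting the pairs $(i,k)$ and $(j,l)$ vary independently shows that the off-diagonal entries of $X_a$ and $Y_a$ vanish and their diagonals are constant reciprocals, so $X_a = c_a\cdot 1$ and $Y_a = c_a^{-1}\cdot 1$ for some $c_a\in\C^\times$ (nonzero because invertibility of~$\tilde\omega$ forces each $\Omega_{a,d-a}$ invertible). Comparing different $(a,b)$ then yields $c_a \zeta^{-da} = c_b \zeta^{-db}$, so $c_a = c\,\zeta^{da}$ for a common $c\in\C^\times$; this is~\eqref{eq:Xi_condition}. Conversely, assuming~\eqref{eq:Xi_condition}, applying it twice (at $b$ and at $d-b$) gives $Y_b = (\zeta^{d^2-db}/\conj c)\cdot 1$, and $\zeta^{d(b-a)}\cdot c\zeta^{da}\cdot(\zeta^{d^2-db}/\conj c) = (c/\conj c)\,\zeta^{d^2}=1$ by the compatibility $\conj c/c=\zeta^{d^2}$ noted after~\eqref{eq:Xi_condition}, verifying~\eqref{eq:triv_rel_u1}.

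The main obstacle I expect is purely the bookkeeping: tracking which block each scalar entry $\omega_{ij}$ or $(\tilde\omega^{-1})_{ij}$ lives in, and following the $\zeta$-phases through the inversion $\tilde\omega\mapsto\tilde\omega^{-1}$. Once the block anti-diagonal structure is set up, the rest is routine linear algebra.
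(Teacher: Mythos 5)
Your proposal is correct and follows essentially the same route as the paper: both exploit the block anti-diagonal structure forced by $d$\nb-homogeneity to reduce the $\zeta$\nb-exponent to $\zeta^{d(d_j-d_i)}$ and then apply a separability argument for an equality of tensor products of matrices, with the identity $Y_b = X_b^{-1}$ playing the role of the paper's remark that the two scalar conditions are inverse to each other. The only differences are organisational (you work block by block and pass from $\tilde\omega$ to $\Omega$ early, while the paper argues with the full matrices $\conj{\tilde\omega}\tilde\omega$ and $\tilde\omega^{-1}\conj{\tilde\omega^{-1}}$ and converts to blocks at the end), so no further comment is needed.
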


\begin{proof}
  Since
  \(\conj{\tilde{\omega}_{i t}} = \conj{\omega_{i t}} \zeta^{-d_i d_t}\)
  and \(\tilde{\omega}_{t k} = \omega_{t k} \zeta^{d_t d_k}\),
  \eqref{eq:triv_rel_u1} is equivalent to
  \[
  \delta_{j,l} \delta_{i,k} = \sum_{s,t=1}^n \zeta^{-d_s d_i + d_i d_t
    + d_j d_t - d_t d_k} \conj{\tilde{\omega}_{i t} (\tilde{\omega}^{-1})_{s
      l}} \tilde{\omega}_{t k} (\tilde{\omega}^{-1})_{j s}.
  \]
  Since \(\omega\)
  is of degree~\(d\) in \(V\otimes V\),
  so is~\(\tilde{\omega}\).
  Equivalently, the corresponding linear map
  \(\tilde{\omega}\colon V\to V\)
  maps elements of degree~\(a\)
  to elements of degree~\(d-a\).
  Then the inverse maps elements of degree~\(d-a\)
  to elements of degree~\(a\).
  Thus both \(\tilde{\omega}\)
  and~\(\tilde{\omega}^{-1}\)
  are homogeneous of degree~\(d\)
  in~\(V\otimes V\).
  That is, \(\tilde{\omega}_{a,b}=0\)
  and \(\tilde{\omega}^{-1}_{a,b}=0\)
  unless \(d_a+d_b=d\).
  Therefore, if
  \(\conj{\tilde{\omega}_{i t} (\tilde{\omega}^{-1})_{s l}} \tilde{\omega}_{t
    k} (\tilde{\omega}^{-1})_{j s} \neq0\),
  then \(d_i + d_t = d\),
  \(d_s+d_l=d\),
  \(d_t + d_k = d\)
  and \(d_j+d_s = d\).
  Equivalently, \(d_i = d_k\),
  \(d_l = d_j\),
  \(d_t = d-d_i\)
  and \(d_s = d - d_j\).  Then the exponent of~\(\zeta\) becomes
  \[
  -d_s d_i + d_i d_t + d_j d_t - d_t d_k = (d_j - d)d_i + d_j (d -
  d_i) = d (d_j - d_i).
  \]
  This does not involve the summation indices \(s,t\),
  so we may put it on the other side of the equation.
  Thus~\eqref{eq:triv_rel_u1} is equivalent to
  \[
  (\zeta^{-d\cdot d_j}\delta_{j,l}) \cdot (\zeta^{d\cdot d_i}
  \delta_{i,k}) = \sum_{s,t=1}^n \conj{\tilde{\omega}_{i t}
    (\tilde{\omega}^{-1})_{s l}} \tilde{\omega}_{t k} (\tilde{\omega}^{-1})_{j
    s} = (\tilde{\omega}^{-1} \conj{\tilde{\omega}^{-1}})_{j l}
  (\conj{\tilde{\omega}}\cdot \tilde{\omega})_{i k}.
  \]
  This is an equality of two exterior tensor products of matrices with
  the entries \(j,l\)
  and \(i,k\),
  respectively.  A matrix equality \(X_1\otimes Y_1 = X_2\otimes Y_2\)
  with \(X_1,Y_1\neq0\)
  holds if and only if \(X_1= c\cdot X_2\),
  \(Y_2= c\cdot Y_1\)
  with some non-zero scalar~\(c\).
  Thus we get a scalar~\(c\)
  with
  \((\conj{\tilde{\omega}}\cdot \tilde{\omega})_{i k} = c \zeta^{d d_i}
  \delta_{i,k}\)
  and
  \((\tilde{\omega}^{-1} \conj{\tilde{\omega}^{-1}})_{j l} = c^{-1} \zeta^{-
    d d_j} \delta_{j,l}\).
  These two conditions are equivalent to each other by taking
  inverses.

  Now we combine the entries~\(\omega_{i,j}\) with \(d_i=a\) and
  \(d_j=b\) into a block matrix~\(\Omega_{a,b}\) as
  in~\eqref{eq:Xi_condition}.  Recall that \(\Omega_{a,b}=0\) unless
  \(a+b=d\).  The change from \(\omega_{i,j}\) to~\(\tilde{\omega}_{i,j}\)
  replaces~\(\Omega_{a,d-a}\) by
  \(\tilde{\Omega}_{a,d-a} \defeq \zeta^{a\cdot (d-a)}\Omega_{a,d-a}\).
  So
  \((\conj{\tilde{\omega}}\cdot \tilde{\omega})_{i k} = c \zeta^{d d_i}
  \delta_{i,k}\) is equivalent to
  \(\zeta^{a\cdot (a-d)}\conj{\Omega_{a,d-a}} \cdot \zeta^{a\cdot
    (d-a)}\Omega_{d-a,a} = c\cdot \zeta^{d a}\cdot 1\), where~\(1\)
  denotes the identity matrix of appropriate size.  This is
  exactly~\eqref{eq:Xi_condition}.
\end{proof}

Lemma~\ref{lem:Xi_condition} explains why we impose the
condition~\eqref{eq:Xi_condition} for our construction: otherwise,
some of our generators~\(u_{i j}\) would be redundant.  The
following lemma was suggested by the anonymous referee.  It
interprets this relation in another way:

\begin{lemma}
  \label{lem:irr_fundrep}
  Assume only that~\(\Omega\) is invertible.  The
  representation~\(u\) is irreducible if and only
  if~\eqref{eq:Xi_condition} holds.
\end{lemma}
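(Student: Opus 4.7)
The plan is to package $\omega$ and its Hilbert-space adjoint $\omega^*$ into a canonical $\G$-equivariant endomorphism of $V$ whose matrix form is controlled by $\Omega$, and then combine Schur's lemma with the rigidity supplied by~\eqref{eq:Xi_condition}. Since $\omega\in V\otimes V$ is $\G$-invariant and $d$-homogeneous, the map $\iota_\omega\colon\C_d\to V\otimes V$, $1\mapsto\omega$, is a morphism in the braided category of $\G$-representations, and so is its Hilbert-space adjoint $\iota_\omega^*\colon V\otimes V\to\C_d$. Composing them in a snake diagram
\[
T\colon V\cong V\otimes\C_d\xrightarrow{\Id\otimes\iota_\omega}V\otimes V\otimes V\xrightarrow{\iota_\omega^*\otimes\Id}\C_d\otimes V\cong V,
\]
with the braidings $\Braiding{\Hils[L]}{V}$ and $\Dualbraiding{V}{\Hils[L]}$ of~\eqref{eq:Braiding_L_V} inserted wherever tensor factors cross inside the braided tensor product, yields a $\G$-intertwiner $T\colon V\to V$. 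Because $\G$-intertwiners are automatically $\T$-equivariant, $T$ is block-diagonal with respect to $V=\bigoplus V_i$, and a $\zeta$-exponent bookkeeping along the lines of the proof of Lemma~\ref{lem:Xi_condition} identifies $T|_{V_i}=\zeta^{-d\cdot d_i}\overline{\Omega_{i,d-i}}\cdot\Omega_{d-i,i}$ up to a uniform normalization; invertibility of~$\Omega$ makes every block non-zero.

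The forward direction then follows from Schur's lemma: if $u$ is irreducible, $T=c\cdot\Id_V$ for a single $c\in\C^\times$, and reading this off block by block is exactly condition~\eqref{eq:Xi_condition}. For the converse, assume~\eqref{eq:Xi_condition}; then $T=c\cdot\Id_V$ expresses the snake identity for the pair $(\iota_\omega,\iota_\omega^*)$ up to the non-zero scalar $c$, exhibiting $(V,c^{-1/2}\iota_\omega,c^{-1/2}\iota_\omega^*)$ as a rigid self-duality in the $\G$-representation category. The assignment $S\mapsto(\Id\otimes S)\iota_\omega$ embeds $\Endo_\G(V)$ injectively into the space of $d$-homogeneous $\G$-invariant vectors in $V\otimes V$ (using invertibility of $\Omega$), while the snake identity supplies a left inverse. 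The resulting trace-like functional $S\mapsto c^{-1}\iota_\omega^*(\Id\otimes S)\iota_\omega$ is faithful and positive on the finite-dimensional $C^*$-algebra $\Endo_\G(V)$, and combining this with the rigidity pins the algebra down to $\C\cdot\Id_V$.

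The main technical point is the careful insertion of the braidings $\Braiding{\Hils[L]}{V}$ and $\Dualbraiding{V}{\Hils[L]}$ in the snake composite: these alone are responsible for the twist $\zeta^{-d\cdot d_i}$ on each isotypic component and hence for the $\zeta$-exponent on the right of~\eqref{eq:Xi_condition}. The converse is the more delicate of the two halves, because $T$ being a scalar does not by itself rule out further non-scalar intertwiners; one genuinely needs both the rigidity supplied by~\eqref{eq:Xi_condition} and the positivity inherited from the Hilbert-space adjoint $\iota_\omega^*$ to see that the endomorphism algebra can be no larger than~$\C$.
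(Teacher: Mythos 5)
Your forward direction (irreducible implies \eqref{eq:Xi_condition}) is essentially the paper's argument in categorical clothing: both exhibit the matrix $\conj{\Omega}\Omega\pi_{\zeta^{-d}}$, up to transpose, as a self-intertwiner of $u$ manufactured from $\omega$, and then apply Schur's lemma. The paper obtains this intertwiner by algebraic manipulation of the relations \eqref{eq:xi_invariant}--\eqref{eq:lin_rel_u}; your snake composite with $\iota_\omega$ and $\iota_\omega^*$ is a clean repackaging of the same computation, modulo the slip that $V\otimes\C_d$ is not $V$ but a character twist of it --- harmless here, since twisting by the invertible object $\C_d$ does not change the intertwiner algebra.

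The converse (\eqref{eq:Xi_condition} implies irreducibility) is where there is a genuine gap. You assert that the conjugate equations for $(V, c^{-1/2}\iota_\omega, c^{-1/2}\iota_\omega^*)$ together with positivity of the trace-like functional pin $\Endo_{\G}(V)$ down to $\C\cdot\Id_V$. That inference is false as a general principle: a direct sum of $k$ copies of an irreducible self-dual object also admits a standard solution of the conjugate equations and a faithful positive trace on its endomorphism algebra $\Mat_k(\C)$, so rigidity plus positivity cannot by themselves exclude non-scalar intertwiners. What your construction actually gives, via Frobenius reciprocity, is a linear isomorphism $\Endo_{\G}(V)\cong\Mor(\C_d,V\otimes V)$ --- your map $S\mapsto(\Id\otimes S)\iota_\omega$ with the snake as inverse --- so irreducibility of $u$ is \emph{equivalent} to the space of $d$\nb-homogeneous invariant vectors in $V\otimes V$ being one-dimensional. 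Establishing that this space is spanned by $\omega$ for the universal quantum group $A_o(V,\pi,\omega)$ is the real content, and it is exactly what the paper declines to prove inside the lemma: it defers this direction to the proof of Theorem~\ref{the:irrep_tensor}, where the bosonisation is identified with $A_o(F)\rtimes\Z$ and Banica's Temperley--Lieb description of $\Rep(A_o(F))$ supplies the one-dimensionality of the relevant intertwiner space. Without that input, or an equivalent diagrammatic count of invariant vectors, your converse does not close.
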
  

\begin{proof}
  The proof of Theorem~\ref{the:irrep_tensor} will show that~\(u\)
  is irreducible.  It uses~\eqref{eq:Xi_condition}, which is a
  standing assumption for the construction of \(A_o(V,\pi,\omega)\).
  Here we will only prove the converse.  If~\(u\) is irreducible,
  any intertwiner of~\(u\) is a constant multiple of the identity.
  We are going to rewrite~\eqref{eq:lin_rel_u} as saying that the
  transpose of \(\conj{\Omega}\Omega \pi_{\zeta^{-d}}\) is an
  intertwiner of~\(u\).

  We have seen in the proof of Lemma~\ref{lem:Xi_condition} that
  \(\omega\) and~\(\tilde{\omega}\) are homogeneous of degree~\(d\).
  Therefore, any non-zero summand on the right hand side
  of~\eqref{eq:lin_rel_u} has \(d_i+d_t=d = d_i + d_k\) and
  \(d_j + d_s = d\).  Thus we may replace the exponent
  \(d_j d_t - d_s d_i\) of~\(\zeta\) by
  \(d_j (d-d_i) - d_i (d-d_j)= d d_j - d d_k\).  We combine the
  entries \(\omega_{i j}\) and~\(\tilde{\omega}_{i j}\) to matrices
  \(\Omega\) and~\(\tilde{\Omega}\), and we let~\(u^{\top}\) be the
  transpose of~\(u\).  We rewrite~\eqref{eq:lin_rel_u} as an
  equality of matrix products:
  \[
    u^{\top} =  \conj{\Omega} \Omega \pi_{\zeta^{-d}} u^{\top} \pi_{\zeta^{d}}
    \tilde{\Omega}^{-1} \conj{\tilde{\Omega}}^{-1}.
  \]
  This is equivalent to
  \(u^{\top}\conj{\tilde{\Omega}} \tilde{\Omega} \pi_{\zeta^{-d}} =
  \conj{\Omega} \Omega \pi_{\zeta^{-d}} u^{\top}\).  Using
  that~\(\Omega\) is homogeneous, we rewrite the entries of
  \(\conj{\tilde{\Omega}} \tilde{\Omega}\) as
  \[
    (\conj{\tilde{\Omega}}\tilde{\Omega})_{a c}
    = \sum_{b=1}^n \zeta^{- d_a d_b + d_b d_c}
    \omega_{a b}\omega_{b c}
    = \sum_{b=1}^n \zeta^{-(d-d_b) d_b + d_b (d-d_b)}
    \omega_{a b}\omega_{b c}
    = \sum_{b=1}^n \omega_{a b}\omega_{b c}.
  \]
  That is,
  \(\conj{\tilde{\Omega}} \tilde{\Omega} = \conj{\Omega} \Omega\).
  Thus
  \(\conj{\Omega}\Omega\pi_{\zeta^{-d}} u^{\top} = \conj{\Omega}
  \Omega \pi_{\zeta^{-d}} u^{\top}\).  Taking transposes, we see
  that the transpose of \(\conj{\Omega}\Omega \pi_{\zeta^{-d}}\) is
  an intertwiner for the representation~\(u\).  Thus it is a scalar
  multiple of~\(1\).  Equivalently,
  \(\conj{\Omega}\Omega \pi_{\zeta^{-d}} = c\cdot 1_n\) for some
  \(c\in\C\).  Since~\(\Omega\) is invertible, \(c\in \C^\times\).
  This is equivalent to~\eqref{eq:Xi_condition_2}, which is
  equivalent to~\eqref{eq:Xi_condition}.
\end{proof}

So far we have worked with a general braided quantum group~\(\G\) to
explain the defining equations
\eqref{eq:u_isometry}--\eqref{eq:comultiplication_u} of
\(A_o(V,\pi,\omega)\).
Now we show that these equations do define a braided quantum group.

Since the matrix
\((u_{i j})_{1\le i,j\le n}\in \Mat_n(A_o(V,\pi,\omega))\) is unitary,
\(\norm{u_{i j}}\le 1\) for \(1\le i,j\le n\).  Hence the universal
\(\Cst\)\nb-algebra with the generators~\(u_{ij}\) and the relations
\eqref{eq:u_isometry}--\eqref{eq:xi_invariant} exists.  To construct
it, we start with the universal unital \Star{}algebra
\(\mathcal{A}_o(V,\pi,\omega)\) with these generators and relations.
Since any \(\Cst\)\nb-seminorm on \(\mathcal{A}_o(V,\pi,\omega)\)
satisfies \(\norm{u_{i j}}\le 1\) for \(1\le i,j\le n\), there is a
largest \(\Cst\)\nb-seminorm on \(\mathcal{A}_o(V,\pi,\omega)\).  And
\(A_o(V,\pi,\omega)\) is the completion of \(\mathcal{A}_o(V,\pi,\omega)\)
in this largest \(\Cst\)\nb-seminorm.

\begin{proposition}
  \label{pro:beta_coaction}
  There is a unique unital \Star{}homomorphism
  \(\beta\colon A_o(V,\pi,\omega)\to \Cont(\T,A_o(V,\pi,\omega))\)
  satisfying~\eqref{eq:beta_on_u}.  It defines a continuous
  \(\T\)\nb-action~\(\beta\) on~\(A_o(V,\pi,\omega)\).
\end{proposition}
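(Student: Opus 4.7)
The plan is to invoke the universal property of $A_o(V,\pi,\omega)$. I would define $\beta$ on the generators by the rule $\beta(u_{ij}) \defeq u_{ij}\otimes \chi^{d_j-d_i}$, where $\chi\in\Cont(\T)$ denotes the identity character $z\mapsto z$, and then verify that these proposed images satisfy the defining relations \eqref{eq:u_isometry}--\eqref{eq:xi_invariant} inside $A_o(V,\pi,\omega)\otimes\Cont(\T) \cong \Cont(\T,A_o(V,\pi,\omega))$. Universality then yields the desired unital \Star{}homomorphism, and uniqueness is automatic because the $u_{ij}$ generate $A_o(V,\pi,\omega)$ as a unital $\Cst$\nb-algebra.

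The unitarity relations \eqref{eq:u_isometry} and \eqref{eq:u_coisometry} are preserved because the scalar factor $\chi^{-(d_i-d_k)+(d_j-d_k)}=\chi^{d_j-d_i}$ arising in each summand $u_{ki}^*u_{kj}$ is independent of~$k$, so it factors out of the sum, leaving $\delta_{i,j}\otimes\chi^{d_j-d_i}$; this equals $\delta_{i,j}\otimes 1$ because either $i=j$ and the exponent vanishes, or $i\neq j$ and both sides are zero. The only substantive verification is the invariance relation \eqref{eq:xi_invariant}, where I would exploit the $d$\nb-homogeneity of~$\omega$: a non-vanishing term $\omega_{ik}$ on the left forces $d_k=d-d_i$, so the image of $\omega_{ik}u_{jk}$ picks up the factor $\chi^{d_k-d_j}=\chi^{d-d_i-d_j}$; a non-vanishing term $\omega_{kj}$ on the right forces $d_k=d-d_j$, so the image of $\omega_{kj}u_{ki}^*$ picks up $\chi^{-(d_i-d_k)}=\chi^{d-d_i-d_j}$. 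Both sides acquire the same scalar factor and the relation is preserved.

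The coaction identity $(\beta\otimes\Id)\circ\beta=(\Id\otimes\Comult[\T])\circ\beta$ then follows from a direct check on generators, since both sides send $u_{ij}$ to $u_{ij}\otimes\chi^{d_j-d_i}\otimes\chi^{d_j-d_i}$ under the canonical identification. Continuity of the resulting action is automatic: the codomain $\Cont(\T,A_o(V,\pi,\omega))$ already encodes norm-continuous $\T$-dependence, so no separate approximation argument is needed. The only delicate point in the whole proof is ensuring that the homogeneity of~$\omega$ does the work in~\eqref{eq:xi_invariant}; apart from that, the statement reduces to a routine application of the universal property.
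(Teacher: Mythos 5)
Your proposal is correct and is essentially the paper's own argument: the paper phrases it as checking that the defining relations are homogeneous for the $\T$\nb-action on the free \Star{}algebra, which is exactly your verification that the images $u_{ij}\otimes\chi^{d_j-d_i}$ satisfy the relations, with the same use of the $d$\nb-homogeneity of $\omega$ (forcing $d_i+d_k=d$ resp.\ $d_k+d_j=d$) to show both sides of~\eqref{eq:xi_invariant} scale by $\chi^{d-d_i-d_j}$.
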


\begin{proof}
  The formula \(\beta_z(u_{i j}) = z^{d_j - d_i} u_{i j}\)
  clearly defines a \(\T\)\nb-action
  on the free \Star{}algebra
  generated by~\(u_{i,j}\).
  We must show that the relations that define~\(A_o(V,\pi,\omega)\)
  are homogeneous.  The relations \eqref{eq:u_isometry}
  and~\eqref{eq:u_coisometry} are homogeneous because
  \[
  \beta_z(u_{k i}^* u_{k j}) = z^{d_j - d_i} u_{k i}^* u_{k j},\qquad
  \beta_z(u_{i k} u_{j k}^*) = z^{d_j - d_i} u_{i k} u_{j k}^*
  \]
  for all \(1\le i,j,k\le n\).
  The relation~\eqref{eq:xi_invariant} is homogeneous because
  \(\omega_{i,k}=0\) for \(d_i+d_k\neq d\), so that
  \begin{align*}
    \beta_z(\zeta^{d_i d_j} \omega_{i k} u_{j k})
    &= z^{d_k - d_j} \cdot \zeta^{d_i d_j} \omega_{i k} u_{j k}
    = z^{d- d_j - d_i} \cdot \zeta^{d_i d_j} \omega_{i k} u_{j k},\\
    \beta_z(\zeta^{d_j d_k} \omega_{k j} u_{k i}^*)
    &= z^{d_k - d_i} \cdot \zeta^{d_j d_k} \omega_{k j} u_{k i}^*
    = z^{d - d_i - d_j} \cdot \zeta^{d_j d_k} \omega_{k j} u_{k i}^*
  \end{align*}
  for all \(i,j,k\).
\end{proof}

\begin{proposition}
  \label{pro:comult_exists}
  There is a unique unital \Star{}homomorphism \(\Comult\colon
  A_o(V,\pi,\omega)\to A_o(V,\pi,\omega)\boxtimes_\zeta A_o(V,\pi,\omega)\)
  satisfying~\eqref{eq:comultiplication_u}.  It is
  \(\T\)\nb-equivariant, coassociative, and satisfies the Podleś
  condition.
\end{proposition}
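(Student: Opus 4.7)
The plan is to construct $\Comult$ by appealing to the universal property of $A_o(V,\pi,\omega)$ provided by Proposition~\ref{pro:universal_property}. One sets $v_{ik}\defeq\sum_{l=1}^n j_1(u_{il})\,j_2(u_{lk})$ in $A_o(V,\pi,\omega)\boxtimes_\zeta A_o(V,\pi,\omega)$ and verifies that the matrix $v=(v_{ik})$ satisfies the defining relations \eqref{eq:u_isometry}--\eqref{eq:xi_invariant} with the same data $(V,\pi,\omega)$. The universal property then yields a unital \Star{}homomorphism on the universal \Star{}algebra $\mathcal{A}_o(V,\pi,\omega)$ sending $u_{ik}\mapsto v_{ik}$, which extends uniquely to the $\Cst$\nb-completion since the target is a $\Cst$\nb-algebra. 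Uniqueness of $\Comult$ satisfying~\eqref{eq:comultiplication_u} is automatic because the $u_{ik}$ generate $A_o(V,\pi,\omega)$ as a $\Cst$\nb-algebra.

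For unitarity of $v$ one uses that $j_1,j_2$ are \Star{}homomorphisms whose ranges satisfy the braiding commutation relations in the braided tensor product governed by~$\zeta$ and the $\T^2$-grading; combined with $u^*u=1=uu^*$ this gives $v^*v=1=vv^*$. The $\T$\nb-homogeneity of each $v_{ik}$ as prescribed by~\eqref{eq:beta_on_u} follows from $\T$\nb-equivariance of $j_1$ and~$j_2$ with respect to the diagonal $\T$-action on the braided tensor product.

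The main obstacle is verifying~\eqref{eq:xi_invariant} for $v$. Conceptually this is the general fact that in the tensor category of representations of a braided quantum group the tensor product of two representations preserving~$\omega$ again preserves~$\omega$, applied here to the two ``legs'' $j_1$ and~$j_2$. Concretely, one substitutes the definition of $v_{ik}$ and applies~\eqref{eq:xi_invariant} for $u$ successively to the $j_2$\nb- and $j_1$\nb-factors; the powers of~$\zeta$ introduced by the braiding between $j_1(A_o(V,\pi,\omega))$ and $j_2(A_o(V,\pi,\omega))$ must then combine with those from the two applications of~\eqref{eq:xi_invariant} to reproduce the exponents $\zeta^{d_j d_i}$ and $\zeta^{d_j(d-d_j)}$ of the relation for $v$. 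The careful bookkeeping of these exponents, using that $\omega_{ik}$ vanishes outside $d_i+d_k=d$, is the main computational step.

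The remaining properties are checked on generators. The $\T$\nb-equivariance of $\Comult$ follows directly from the homogeneity weights of~$v_{ik}$. Coassociativity reduces to the associativity of the triple product $j_1(u_{ia})j_2(u_{ab})j_3(u_{bk})$ in the iterated braided tensor product together with the universal property of $\boxtimes_\zeta$ that identifies such products as images of elements of the threefold product. The Podleś density condition follows from the unitarity of $v$: multiplying $\Comult(u_{ik})$ on the left or on the right by suitable $j_1(u_{lm}^*)$ or $j_2(u_{lm}^*)$ recovers generators of $1\boxtimes A_o(V,\pi,\omega)$ and $A_o(V,\pi,\omega)\boxtimes 1$, whose product exhausts the braided tensor product.
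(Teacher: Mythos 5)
Your overall strategy coincides with the paper's: both proofs define $v_{ik}=\sum_l j_1(u_{il})j_2(u_{lk})$ (in the paper this is the operator $U=u_{12}\Braiding{\Hils[L]}{\Hils[L]}_{23}u_{12}\Dualbraiding{\Hils[L]}{\Hils[L]}_{23}$ on $V\otimes\Hils[L]\otimes\Hils[L]$), verify the defining relations \eqref{eq:u_isometry}--\eqref{eq:xi_invariant} for $v$, and invoke the universal property; unitarity, $\T$\nb-equivariance, coassociativity and the Podle\'s condition are then handled exactly as you indicate.

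The one place where your proposal stops short is exactly the place where the paper's proof does its real work. You correctly identify that the whole content of the proposition is the verification of \eqref{eq:xi_invariant} for $v$, i.e.\ that $(v\tenscorep v)\omega_{12}=\omega_{12}$, but you only assert that ``the powers of $\zeta$ \dots must combine'' and defer the bookkeeping. This is not a routine check: the paper does it by a five-step string-diagram computation which rearranges the four copies of $u$ occurring in $(U\tenscorep U)\omega_{12}$ so that $(u\tenscorep u)\omega_{12}=\omega_{12}$ can be applied twice, once to the two $j_2$\nb-legs and once to the two $j_1$\nb-legs. The rearrangement is legitimate only because of the naturality and compatibility of the braiding unitaries $\Braiding{\Hils[L]}{V}$, $\Braiding{\Hils[L]}{\Hils[L]}$ and $\Braiding{\Hils[L]\otimes\Hils[L]}{V}$ (hexagon-type identities for the $\Rmat$\nb-matrix $(l,m)\mapsto\zeta^{lm}$) together with the fact that the map $\C_d\to V\otimes V$, $c\mapsto c\omega$, is $\T$\nb-equivariant, so that $\omega_{12}$ slides past the braiding strands. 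Your ``general fact'' that a tensor product of $\omega$\nb-preserving representations preserves $\omega$ is not directly applicable as stated, because $j_1(u)$ and $j_2(u)$ are not separately representations of $A_o(V,\pi,\omega)$ on $V$ in the required sense; the argument really must be run on $v$ itself. A smaller instance of the same issue occurs in your Podle\'s argument: recovering $j_1(u_{ik})=\sum_j\Comult(u_{ij})j_2(u_{kj}^*)$ handles the generators, but to get density of $\Comult(B)j_2(B)$ you still need that the set of such $b$ is closed under multiplication, which uses the commutation relation $j_1(x)j_2(y)=\zeta^{kl}j_2(y)j_1(x)$ for homogeneous elements; the paper spells this out. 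So: right approach throughout, but the decisive computation is named rather than performed.
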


\begin{proof}
  We abbreviate \(B\defeq A_o(V,\pi,\omega)\).  Recall the faithful
  representation \(B\hookrightarrow \Bound(\Hils[L])\) from the
  proof of Proposition~\ref{pro:universal_property}.  Define the
  braiding unitaries \(\Braiding{\Hils[L]}{\Hils[L]}\) and
  \(\Dualbraiding{\Hils[L]}{\Hils[L]}\) on
  \(\Hils[L] \otimes \Hils[L]\) by
  \begin{equation}
    \label{eq:Braiding_L_L}
    \Braiding{\Hils[L]}{\Hils[L]}(\lambda_a \otimes \lambda_b)
    \defeq \zeta^{l_a l_b} \lambda_b \otimes \lambda_a,\qquad
    \Dualbraiding{\Hils[L]}{\Hils[L]}(\lambda_a \otimes \lambda_b)
    \defeq \zeta^{-l_a l_b} \lambda_b \otimes \lambda_a.
  \end{equation}
  There is a faithful representation
  \(B\boxtimes B\hookrightarrow \Bound(\Hils[L]\otimes\Hils[L])\),
  such that the inclusions
  \(j_1,j_2\colon B \rightrightarrows B\boxtimes B\) of the tensor
  factors become \(j_1(x) = x_1\) and
  \(j_2(x) = \Braiding{\Hils[L]}{\Hils[L]}
  x_1\Dualbraiding{\Hils[L]}{\Hils[L]}\) in leg numbering notation.
  Let
  \(U\defeq u_{12}\Braiding{\Hils[L]}{\Hils[L]}_{23}
  u_{12}\Dualbraiding{\Hils[L]}{\Hils[L]}_{23} \in \Bound(V\otimes
  \Hils[L]\otimes \Hils[L])\).  A \Star{}homomorphism
  \(\Comult\colon B\to B\boxtimes B\)
  satisfying~\eqref{eq:comultiplication_u} exists if and only if the
  matrix coefficients of~\(U\) satisfy the relations
  \eqref{eq:u_isometry}--\eqref{eq:xi_invariant} that define
  \(A_o(V,\pi,\omega)\).  Equations \eqref{eq:u_isometry}
  and~\eqref{eq:u_coisometry} hold because~\(U\) is unitary.  Define
  \[
    U\tenscorep U \defeq
    \Braiding{\Hils[L] \otimes \Hils[L]}{V}_{234}
    U_{123}
    \Dualbraiding{V}{\Hils[L] \otimes \Hils[L]}_{234}
    U_{234}
    \in \Bound(V\otimes V\otimes \Hils[L] \otimes \Hils[L])
  \]
  as in~\eqref{eq:tens_u}; here
  \begin{equation}
    \label{eq:Braiding_LL_V}
    \Braiding{\Hils[L] \otimes \Hils[L]}{V}
    (\lambda_a \otimes \lambda_b \otimes e_i)
    = \zeta^{(l_a+l_b)d_i} e_i \otimes \lambda_a \otimes \lambda_b,
    \qquad
    \Dualbraiding{\Hils[L] \otimes \Hils[L]}{V} =
    (\Braiding{\Hils[L] \otimes \Hils[L]}{V})^*.
  \end{equation}
  The proof of Proposition~\ref{pro:universal_property} implies
  \(\Braiding{\Hils[L]}{V}_{23} u_{12}
  \Dualbraiding{V}{\Hils[L]}_{23} u_{23}\omega_{12} = (u\tenscorep
  u)\omega_{12}=\omega_{12}\) and shows that the coefficients of~\(U\)
  satisfy~\eqref{eq:xi_invariant} if and only if
  \((U\tenscorep U) \omega_{12} = \omega_{12}\).  Now we compute
  \[
    (U\tenscorep U)\omega_{12}
    = \Braiding{\Hils[L] \otimes \Hils[L]}{V}_{234}
    (u_{12}\Braiding{\Hils[L]}{\Hils[L]}_{23}
    u_{12}\Dualbraiding{\Hils[L]}{\Hils[L]}_{23})
    \Dualbraiding{V}{\Hils[L] \otimes \Hils[L]}_{234}
    (u_{23}\Braiding{\Hils[L]}{\Hils[L]}_{34}
    u_{23}\Dualbraiding{\Hils[L]}{\Hils[L]}_{34})\omega_{12}.
  \]
  We are going to prove \((U\tenscorep U)\omega_{12} = \omega_{12}\) using
  \((u\tenscorep u)\omega_{12}=\omega_{12}\) and properties of the
  braiding unitaries.  This computation is more readable when
  written in pictures.  Using the definition~\eqref{eq:tens_u} of
  \(u \tenscorep u\), we rewrite
  \((u\tenscorep u)\omega_{12}=\omega_{12}\) through the diagram
  \begin{equation}
    \label{eq:inv_vect}
    \begin{tikzpicture}[baseline=(current bounding box.west),scale=.4]
      \draw (1,7.5)--(1,7.2);
      \draw (0.4,6)--(1.6,6)--(1.6,7.2)--(0.4,7.2)--(0.4,6);
      \draw (1,6.6) node {$\omega$};
      \draw (0.5,6)--(0.5,2.5);
      \draw (1.5,6)--(1.5,5);
      \draw (2.5,7.5)--(2.5,5);
      \draw (1.4,5)--(2.6,5)--(2.6,3.8)--(1.4,3.8)--(1.4,5);
      \draw (2,4.4) node {$u$};
      \draw (1.6,3.8) to[out=315, in=90] (2.5,1.9) to[out=270, in=45] (1.6,0);
      \draw[overar] (2.4,3.8)--(1.5,2.5);
      \draw (0.4,2.5)--(1.6,2.5)--(1.6,1.3)--(0.4,1.3)--(0.4,2.5);
      \draw (1,1.9) node {$u$};
      \draw (0.5,1.3)--(0.5,0);
      \draw[overar] (1.5,1.3)--(2.5,0);
    \end{tikzpicture}
    \ =\
    \begin{tikzpicture}[baseline=(current bounding box.west),scale=.4]
      \draw (1,8)--(1,7.2);
      \draw (0.4,6)--(1.6,6)--(1.6,7.2)--(0.4,7.2)--(0.4,6);
      \draw (1,6.6) node {$\omega$};
      \draw (0.5,6)--(0.5,5);
      \draw (1.5,6)--(1.5,5);
      \draw (2.5,8)--(2.5,5);
    \end{tikzpicture}
  \end{equation}
  The following diagrams illustrate the proof that
  \((U\tenscorep U)\omega_{12} =\omega_{12}\).
  \[
  \begin{tikzpicture}[baseline=(current bounding box.west),scale=.4]
    \draw (1.5,8)--(1.5,7.2);
    \draw (0.9,6)--(2.1,6)--(2.1,7.2)--(0.9,7.2)--(0.9,6);
    \draw (1.5,6.6) node {$\omega$};
    \draw (1,6)--(1,-0.4);
    \draw (2,6)--(2.5,5);
    \draw (3,8) to[out=300, in=80] (4.1,4.4) to[out=260, in=70] (3.5,2.6);
    \draw[overar] (4,8)--(3.5,5);
    \draw (2.4,5)--(3.6,5)--(3.6,3.8)--(2.4,3.8)--(2.4,5);
    \draw (3,4.4) node {$u$};
    \draw (2.5,3.8)--(2.5,2.6);
    \draw (2.4,2.6)--(3.6,2.6)--(3.6,1.4)--(2.4,1.4)--(2.4,2.6);
    \draw (3, 2) node {$u$};
    \draw (2.5,1.4) to[out=315, in=90] (4.4,-2) to[out=270, in=45] (2,-5.5);
    \draw[overar] (3.5,1.4) to[out=225, in=90] (2.2, 0.3) to[out=270, in=90] (3,-1.5) to[out=270, in=45] (2,-3.1);
    \draw[overar] (3.5,3.8) to[out=315, in=45] (4.4,1.5) to[out=225, in=45] (2,-0.4);
    \draw (0.9,-0.4)--(2.1,-0.4)--(2.1,-1.6)--(0.9,-1.6)--(0.9,-0.4);
    \draw (1.5,-1) node {$u$};
    \draw[overar] (2,-1.6)--(4.5,-5.5);
    \draw (0.9,-3.1)--(2.1,-3.1)--(2.1,-4.3)--(0.9,-4.3)--(0.9,-3.1);
    \draw (1.5,-3.7) node {$u$};
    \draw (1,-1.6)--(1,-3.1);
    \draw (1,-4.3)--(1,-5.5);
    \draw[overar] (2,-4.3)--(3,-5.5);
  \end{tikzpicture}
  \ =\
  \begin{tikzpicture}[baseline=(current bounding box.west),scale=.4]
    \draw (2,8)--(2,7.2);
    \draw (1.4,6)--(2.6,6)--(2.6,7.2)--(1.4,7.2)--(1.4,6);
    \draw (2,6.6) node {$\omega$};
    \draw (1.5,6)--(1.5,1.5);
    \draw (2.5,6)--(2.5,5);
    \draw (3,8) to[out=300, in=90] (4.5,4.4) to[out=270, in=90] (4.5,1);
    \draw[overar] (4,8)--(3.5,5);
    \draw (2.4,5)--(3.6,5)--(3.6,3.8)--(2.4,3.8)--(2.4,5);
    \draw (3,4.4) node {$u$};
    \draw (2.5,3.8) to[out=270, in=90] (3.5,1);
    \draw[overar] (3.5,3.8)--(2.5,1.5);
    \draw (1.4,1.5)--(2.6,1.5)--(2.6,0.3)--(1.4,0.3)--(1.4,1.5);
    \draw (2,0.8) node {$u$};
    \draw (1.5,0.3)--(1.5,-2);
    \draw (3.4,1)--(4.6,1)--(4.6,-0.2)--(3.4,-0.2)--(3.4,1);
    \draw (4,0.3) node {$u$};
    \draw (3.5,-0.2) to[out=315 , in=90] (4.6,-2) to[out=270, in=45] (2,-4);
    \draw[overar] (4.5,-0.2)--(2.5,-2);
    \draw (1.4,-2)--(2.6,-2)--(2.6,-3.2)--(1.4,-3.2)--(1.4,-2);
    \draw (2,-2.6) node {$u$};
    \draw (1.5,-3.2)--(1.5,-4);
    \draw[overar] (2.5,-3.2)--(3,-4);
    \draw[overar] (2.5,0.3)--(4,-4);
  \end{tikzpicture}
  \ =\
  \begin{tikzpicture}[baseline=(current bounding box.west),scale=.4]
    \draw (2,8.5)--(2,7.7);
    \draw (1.4,6.5)--(2.6,6.5)--(2.6,7.7)--(1.4,7.7)--(1.4,6.5);
    \draw (2,7.1) node {$\omega$};
    \draw (1.5,6.5)--(1.5,0);
    \draw (2.5,6.5)--(3,3.5);
    \draw (3,8.5) to[out=300, in=90] (4,4.4) to[out=270, in=90] (4,3.5);
    \draw (2.9,3.5)--(4.1,3.5)--(4.1,2.3)--(2.9,2.3)--(2.9,3.5);
    \draw (3.5,2.9) node {$u$};
    \draw (3,2.3) to[out=315, in=90] (4,1) to[out=270, in=45] (2,-2);
    \draw[overar] (4,2.3)--(2.5,0);
    \draw (1.4,0)--(2.6,0)--(2.6,-1.1)--(1.4,-1.1)--(1.4,0);
    \draw (2,-0.6) node {$u$};
    \draw (1.5,-1.1)--(1.5,-2);
    \draw[overar] (2.5,-1.1)--(3,-2);
    \draw[overar] (4.5,8.5) to[out=225, in=90](2.1,2.6) to[out=270, in=135] (5,-2);
  \end{tikzpicture}
  \ =\
  \begin{tikzpicture}[baseline=(current bounding box.west),scale=.4]
    \draw (2,8)--(2,7.2);
    \draw (1.4,6)--(2.6,6)--(2.6,7.2)--(1.4,7.2)--(1.4,6);
    \draw (2,6.6) node {$\omega$};
    \draw (1.5,6)--(1.5,1.5);
    \draw (2.5,6)--(2.5,5);
    \draw (4.5,8)--(4.5,-2);
    \draw[overar] (3.5,8)--(3.5,5);
    \draw (2.4,5)--(3.6,5)--(3.6,3.8)--(2.4,3.8)--(2.4,5);
    \draw (3,4.4) node {$u$};
    \draw (2.5,3.8) to[out=270, in=90] (3.5,1) to[out=270, in=45] (2.5,-2);
    \draw[overar] (3.5,3.8)--(2.5,1.5);
    \draw (1.4,1.5)--(2.6,1.5)--(2.6,0.3)--(1.4,0.3)--(1.4,1.5);
    \draw (2,0.8) node {$u$};
    \draw (1.5,0.3)--(1.5,-2);
    \draw[overar] (2.5,0.3)--(3.5,-2);
  \end{tikzpicture}
  \ =\
  \begin{tikzpicture}[baseline=(current bounding box.west),scale=.4]
    \draw (1,8)--(1,7.2);
    \draw (0.4,6)--(1.6,6)--(1.6,7.2)--(0.4,7.2)--(0.4,6);
    \draw (1,6.6) node {$\omega$};
    \draw (0.5,6)--(0.5,5);
    \draw (1.5,6)--(1.5,5);
    \draw (2.5,8)--(2.5,5);
    \draw (3.5,8)--(3.5,5);
  \end{tikzpicture}
  \]
  This finishes the construction of
  \(\Comult\colon A_o(V,\pi,\omega) \to A_o(V,\pi,\omega)\boxtimes
  A_o(V,\pi,\omega)\).

  A direct computation shows that~\(\Comult\) is
  \(\T\)\nb-equivariant for the \(\T\)\nb-action~\(\beta\)
  on \(A_o(V,\pi,\omega)\) and the induced \(\T\)\nb-action on
  \(A_o(V,\pi,\omega)\boxtimes A_o(V,\pi,\omega)\).  It is coassociative
  because both \((\Comult\boxtimes\Id)\circ\Comult\) and
  \((\Id\boxtimes\Comult)\circ\Comult\) map~\(u\) to
  \(j_1(u)\cdot j_2(u)\cdot j_3(u)\).

  The Podle\'s condition for \(B\defeq A_o(V,\pi,\omega)\) follows by
  the same argument as
  in~\cite{Kasprzak-Meyer-Roy-Woronowicz:Braided_SU2}*{Section~4}.
  Let \(S\defeq \setgiven{b\in B}{j_1(b) \in \Comult(B)j_2(B)}\).
  The set~\(S\) contains \(u_{ij}\) and~\(u_{ij}^*\) for all
  \(1\le i,j\le n\) because~\eqref{eq:comultiplication_u} is
  equivalent to
  \(j_1(u_{ik}) = \sum_{j=1}^n \Comult(u_{ij}) j_2(u_{kj}^*)\).  Let
  \(x,y\in S\) be homogeneous of degree \(k\) and~\(l\),
  respectively.  Then \(j_1(x) j_2(y) = \zeta^{kl}j_2(y) j_1(x)\).
  This implies \(j_1(x) j_2(B) = j_2(B) j_1(x)\) and
  \(j_1(B) j_2(y) = j_2(y) j_1(B)\) as in
  \cite{Kasprzak-Meyer-Roy-Woronowicz:Braided_SU2}*{Proposition~3.1}.
  So
  \begin{align*}
    j_1(x\cdot y)
    = j_1(x) j_1(y) \in \Comult(B) j_2(B)j_1(y)
    &= \Comult(B)j_1(y)j_2(B)
    \\ &\subseteq \Comult(B)\Comult(B) j_2(B) j_2(B)
         = \Comult(B)j_2(B).
  \end{align*}
  Thus \(x\cdot y\in S\).
  Therefore, all the monomials in \(u_{ij}\) and~\(u_{ij}^*\)
  belong to~\(S\).  Then~\(S\) is dense in~\(B\).  This gives one
  of the Podle\'s conditions:
  \(B\boxtimes B = j_1(B)j_2(B) \subseteq \Comult(B)j_2(B)j_2(B) =
  \Comult(B)j_2(B)\).  The other Podle\'s condition is shown
  similarly by proving that
  \(R\defeq \setgiven{x\in B}{j_2(x)\in j_1(B) \Comult(B)}\) is
  dense in~\(B\).
\end{proof}

Propositions \ref{pro:beta_coaction} and~\ref{pro:comult_exists}
show that \(A_o(V,\pi,\omega)\) with the \(\T\)\nb-action~\(\beta\) and
the comultiplication~\(\Comult\) is a braided compact quantum group
over~\(\T\).  By construction,
\(u\in \Mat_n(A_o(V,\pi,\omega))\) and the underlying
representation~\(\pi\) of~\(\T\) form a representation on
\(V\cong\C^n\).  Up to isomorphism, \(A_o(V,\pi,\omega)\) does not
depend on the choice of the basis \(e_1,\dotsc,e_n\) in~\(V\).  To
make this clearer, we now treat~\(u\) as a representation in
\(\Bound(V) \otimes A_o(V,\pi,\omega)\).

\begin{theorem}
  \label{the:universal_Ao}
  The braided quantum group \(A_o(V,\pi,\omega)\)
  is the universal one with a representation on~\((V,\pi)\)
  for which \(\omega\in V\otimes V\)
  is \(d\)\nb-homogeneous
  and \(\G\)\nb-invariant.
  That is, if\/ \(\G=(B,\beta,\Comult)\)
  is another braided quantum group and \(U\in \Bound(V) \otimes B\)
  is a representation of\/~\(\G\)
  on~\((V,\pi)\)
  such that \(\omega\in V\otimes V\) is \(d\)\nb-homogeneous
  and \(\G\)\nb-invariant,
  then there is a unique \(\T\)\nb-equivariant
  Hopf \Star{}homomorphism \(A_o(V,\pi,\omega) \to B\)
  mapping \(u\mapsto U\).
\end{theorem}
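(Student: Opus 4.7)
The plan is to apply Proposition~\ref{pro:universal_property} in reverse to convert the abstract hypotheses on~\(U\) into explicit algebraic relations on its matrix coefficients, and then to invoke the universal property of \(A_o(V,\pi,\omega)\) as a \(\Cst\)\nb-algebra defined by generators and relations. Concretely, fix the eigenbasis \(e_1,\dotsc,e_n\) of~\((V,\pi)\) used in the statement and write \(U = \sum_{i,j} e_{ij} \otimes U_{ij}\) with \(U_{ij}\in B\). Proposition~\ref{pro:universal_property}, applied to~\(\G\) with \((U_{ij})\) in place of \((u_{ij})\), says exactly that the coefficients~\(U_{ij}\) satisfy the relations \eqref{eq:u_isometry}--\eqref{eq:xi_invariant} inside~\(B\) together with the \(\T\)\nb-equivariance \eqref{eq:beta_on_u} and comultiplication identity \eqref{eq:comultiplication_u}. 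The universal property built into the construction of \(A_o(V,\pi,\omega)\) then yields a unique unital \Star{}homomorphism \(\varphi\colon A_o(V,\pi,\omega) \to B\) with \(\varphi(u_{ij}) = U_{ij}\) for all~\(i,j\); equivalently, \((\Id_{\Bound(V)}\otimes\varphi)(u) = U\).

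It remains to check that~\(\varphi\) is \(\T\)\nb-equivariant and a morphism of braided quantum groups. Equivariance follows because both~\(\beta\) on \(A_o(V,\pi,\omega)\) and the \(\T\)\nb-action on~\(B\) act on generators by the same character \(z\mapsto z^{d_j-d_i}\), so~\(\varphi\) intertwines the actions on a generating set and hence everywhere. For the comultiplications, the \(\T\)\nb-equivariance of~\(\varphi\) together with the functoriality of~\(\boxtimes_\zeta\) (the monoidal structure on \(\T\)\nb-\(\Cst\)\nb-algebras from~\cite{Kasprzak-Meyer-Roy-Woronowicz:Braided_SU2}*{Section~3}) yields an induced unital \Star{}homomorphism \(\varphi\boxtimes_\zeta\varphi\colon A_o(V,\pi,\omega)\boxtimes_\zeta A_o(V,\pi,\omega) \to B\boxtimes_\zeta B\) characterised by \((\varphi\boxtimes_\zeta\varphi)(j_1(x)j_2(y)) = j_1(\varphi(x))\, j_2(\varphi(y))\). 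The Hopf identity \((\varphi\boxtimes_\zeta\varphi)\circ \Comult = \Comult\circ \varphi\) need then only be verified on the generators~\(u_{ij}\), where both sides evaluate to \(\sum_l j_1(U_{il})\, j_2(U_{lk})\) by~\eqref{eq:comultiplication_u} applied in \(A_o(V,\pi,\omega)\) and in~\(B\), respectively. Uniqueness of~\(\varphi\) is automatic because the~\(u_{ij}\) generate \(A_o(V,\pi,\omega)\) as a \(\Cst\)\nb-algebra.

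The only non-routine point is ensuring that \(\varphi\boxtimes_\zeta\varphi\) is well defined, that is, that a \(\T\)\nb-equivariant unital \Star{}homomorphism does functorially extend to the Rieffel-deformed braided tensor products. This is the step where the \(\T\)\nb-equivariance of~\(\varphi\) is essential, and it is exactly what the monoidal framework of~\cite{Kasprzak-Meyer-Roy-Woronowicz:Braided_SU2}*{Section~3} provides. Once this functoriality is taken for granted, the theorem reduces to the observation that the generators-and-relations presentation of \(A_o(V,\pi,\omega)\) recorded in Proposition~\ref{pro:universal_property} captures precisely the desired universal property at the level of braided quantum groups.
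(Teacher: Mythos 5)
Your proposal is correct and follows essentially the same route as the paper: the paper's proof is a two-sentence appeal to Proposition~\ref{pro:universal_property}, which converts the hypotheses on~\(U\) into the defining relations \eqref{eq:u_isometry}--\eqref{eq:comultiplication_u} and hence yields the unique \(\T\)\nb-equivariant Hopf \Star{}homomorphism via the generators-and-relations presentation. You merely spell out the verification of equivariance and compatibility with the comultiplications (including the functoriality of \(\boxtimes_\zeta\)) that the paper leaves implicit.
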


\begin{proof}
  Choose any orthonormal basis \(f_1,\dotsc,f_n\) of
  \(\T\)\nb-eigenvectors in~\(V\) as above and use it to turn~\(U\)
  into a matrix \((U_{ij})_{1\le i,j\le n} \in \Mat_n(B)\).
  Proposition~\ref{pro:universal_property} gives a unique unital
  \Star{}homomorphism \(A_o(V,\pi,\omega)\to B\) that maps
  \(u_{i j}\mapsto U_{i j}\) and that is \(\T\)\nb-equivariant and
  compatible with comultiplications.
\end{proof}

\begin{corollary}
  Up to isomorphism, the braided quantum group \(A_o(V,\pi,\omega)\)
  does not depend on the choice of the basis \(e_1,\dotsc,e_n\) used
  to build it.  Even more, a \(\T\)\nb-equivariant unitary operator
  \(\varphi\colon (V_1,\pi_1) \to (V_2,\pi_2)\) with
  \(\varphi(\omega_1) = \omega_2\) induces an isomorphism of braided
  quantum groups
  \(A_o(V_1,\pi_1,\omega_1) \cong A_o(V_2,\pi_2,\omega_2)\).
\end{corollary}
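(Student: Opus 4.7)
Both claims follow from the universal property in Theorem~\ref{the:universal_Ao}: the ``does not depend on the basis'' statement is the special case of the second in which one takes two different eigenbases for the same triple $(V,\pi,\omega)$ and lets $\varphi$ be the identity of $V$, which is automatically $\T$-equivariant since both bases consist of eigenvectors.

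For the main claim, write $B_i \defeq A_o(V_i,\pi_i,\omega_i)$, let $u_i \in \Bound(V_i) \otimes B_i$ be the canonical representation, and let $\varphi\colon (V_1,\pi_1) \to (V_2,\pi_2)$ be a $\T$-equivariant unitary with $(\varphi \otimes \varphi)(\omega_1) = \omega_2$. My plan is to transport $u_2$ along $\varphi$ to a representation of $B_2$ on $(V_1,\pi_1)$, namely
\[
  U \defeq (\varphi^* \otimes \Id)\, u_2\, (\varphi \otimes \Id) \in \Bound(V_1) \otimes B_2,
\]
verify the hypotheses of Theorem~\ref{the:universal_Ao}, and so produce a $\T$-equivariant Hopf $*$-homomorphism $\Phi\colon B_1 \to B_2$ with $\Phi(u_1) = U$. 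The same procedure applied to the unitary $\varphi^{-1}\colon (V_2,\pi_2) \to (V_1,\pi_1)$, which satisfies the analogous hypothesis, yields a $\T$-equivariant Hopf $*$-homomorphism $\Psi\colon B_2 \to B_1$ in the opposite direction. The compositions $\Psi\circ\Phi$ and $\Phi\circ\Psi$ fix the respective canonical representations and so equal the identities by the uniqueness clause of Theorem~\ref{the:universal_Ao}.

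To apply the universal property I must check that $U$ is a representation of $B_2$ on $(V_1,\pi_1)$ and that $\omega_1$ is $d$\nobreakdash-homogeneous and $B_2$\nobreakdash-invariant for $U$. Unitarity, $\T$\nobreakdash-invariance, and the braided corepresentation identity pass from $u_2$ to $U$ by conjugation with the $\T$\nobreakdash-equivariant unitary $\varphi$; homogeneity of $\omega_1$ is immediate from $(\varphi\otimes\varphi)(\omega_1) = \omega_2$ and the equivariance of $\varphi$. The only step that is not completely formal is the invariance $(U \tenscorep U)\omega_1 = \omega_1$. Here I would use that the $\T$\nobreakdash-equivariance of $\varphi$ intertwines the braiding unitaries~\eqref{eq:Braiding_L_V} for $V_1$ and $V_2$,
\[
  (\varphi \otimes \Id_{\Hils[L]}) \circ \Braiding{\Hils[L]}{V_1}
  = \Braiding{\Hils[L]}{V_2} \circ (\Id_{\Hils[L]} \otimes \varphi),
\]
together with the analogous identity for the dual braiding. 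Plugging these into~\eqref{eq:tens_u} shows that the tensor product of representations is natural in $\varphi$:
\[
  U \tenscorep U
  = (\varphi^* \otimes \varphi^* \otimes \Id_{\Hils[L]})\,(u_2 \tenscorep u_2)\,(\varphi \otimes \varphi \otimes \Id_{\Hils[L]}).
\]
Combining this with $(u_2 \tenscorep u_2)\omega_2 = \omega_2$ and $(\varphi\otimes\varphi)(\omega_1) = \omega_2$ gives the required invariance, and completes the construction of $\Phi$.
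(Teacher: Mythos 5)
Your proof is correct and follows essentially the same route as the paper: both rest on Theorem~\ref{the:universal_Ao} and on transporting representations along the $\T$\nb-equivariant unitary~$\varphi$. The only difference is presentational — the paper packages the final step as a natural bijection of Hom\nb-sets plus the Yoneda Lemma, whereas you instantiate that argument concretely by constructing the two mutually inverse Hopf \Star{}homomorphisms and invoking the uniqueness clause; you also spell out the naturality of the braiding unitaries needed for the invariance of $\omega_1$ under $U\tenscorep U$, which the paper leaves implicit.
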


\begin{proof}
  The universal property in Theorem~\ref{the:universal_Ao} does not
  mention the basis \(e_1,\dotsc,e_n\).  Hence the braided quantum
  groups built from different orthonormal bases of eigenvectors for
  the \(\T\)\nb-action~\(\pi\) have the same universal property.
  This implies that they are isomorphic as braided quantum groups.
  More generally, let \(\varphi\colon (V_1,\pi_1) \to (V_2,\pi_2)\)
  be a \(\T\)\nb-equivariant unitary operator with
  \(\varphi(\omega_1) = \omega_2\).  Let \(\G=(B,\beta,\Comult)\) be
  another braided quantum group and let
  \(U_1\in \Bound(V_1) \otimes B\) be a representation of\/~\(\G\)
  on~\((V_1,\pi_1)\) such that \(\omega_1\in V_1\otimes V_1\) is
  \(d\)\nb-homogeneous and \(\G\)\nb-invariant.  Then
  \(U_2 \defeq (\varphi\otimes 1) U_1 (\varphi\otimes 1)^* \in
  \Bound(V_2) \otimes B\) is a representation of~\(\G\) on
  \((V_2,\pi_2)\) such that \(\omega_2\in V_2\otimes V_2\) is
  \(d\)\nb-homogeneous and \(\G\)\nb-invariant.  Conversely, any
  representation~\(U_2\) of~\(\G\) on \((V_2,\pi_2)\) such that
  \(\omega_2\in V_2\otimes V_2\) is \(d\)\nb-homogeneous and
  \(\G\)\nb-invariant gives a representation
  \(U_1 \defeq (\varphi\otimes 1)^* U_2 (\varphi\otimes 1) \in
  \Bound(V_1) \otimes B\) on \((V_1,\pi_1)\) such that
  \(\omega_1\in V_1\otimes V_1\) is \(d\)\nb-homogeneous and
  \(\G\)\nb-invariant.  By Theorem~\ref{the:universal_Ao}, this
  bijection on certain classes of representations induces a
  bijection between \(\T\)\nb-equivariant Hopf \Star{}homomorphisms
  from \(A_o(V_1,\pi_1,\omega_1)\) and \(A_o(V_2,\pi_2,\omega_2)\)
  to~\(\G\).  By the Yoneda Lemma, this bijection comes from an
  isomorphism of braided quantum groups
  \(A_o(V_1,\pi_1,\omega_1) \cong A_o(V_2,\pi_2,\omega_2)\).
\end{proof}

Next we describe some isomorphisms of our braided quantum groups
that shift the homogeneity degree~\(d\) of~\(\omega\).  Let \((V,\pi)\)
be a representation of~\(\T\) on a finite-dimensional vector space
and let \(s\in\Z\).  Then \(\pi'_z \defeq z^{-s}\cdot \pi_z\) is
another representation of~\(\T\) on~\(V\).  Let
\(d_1,\dotsc,d_n\in\Z\) and let \(e_1,\dotsc,e_n\in V\) be an
orthonormal basis of~\(V\) such that \(\pi_z(e_i) = z^{d_i} e_i\)
for all \(z\in\T\).  Then \(\pi'_z(e_i) = z^{d_i'} e_i\) with
\(d_i' \defeq d_i - s\), that is, the degrees~\(d_i\) are shifted.

\begin{lemma}
  \label{lem:shift_degrees}
  Let
  \[
  \omega' = \sum_{i,j=1}^n \zeta^{-s\cdot d_j} \omega_{i j} e_i \otimes e_j.
  \]
  Then there is a \(\T\)\nb-equivariant Hopf \Star{}isomorphism
  \(A_o(V,\pi,\omega) \cong A_o(V,\pi',\omega')\) which maps the
  generator~\(u_{ij}\) of~\(A_o(V,\pi,\omega)\) to
  \(u_{ij}\in A_o(V,\pi',\omega')\).
\end{lemma}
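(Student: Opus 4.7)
The plan is to show that the defining $*$-algebra relations for $A_o(V,\pi,\omega)$ and $A_o(V,\pi',\omega')$ coincide under the identification $u_{ij}\leftrightarrow u_{ij}$, and that the $\T$-action and comultiplication match automatically. The $\T$-action formula~\eqref{eq:beta_on_u} yields the same action on both algebras since $d'_j - d'_i = d_j - d_i$, and because the braided tensor product $B \boxtimes_\zeta B$ depends only on the $\T$-action $\beta$ on $B$, the target of $\Comult$ and the formula~\eqref{eq:comultiplication_u} are the same. So everything reduces to the equivalence of defining relations at the $*$-algebra level.

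Before comparing relations I would verify that $(V,\pi',\omega')$ satisfies the standing hypotheses. The vector $\omega'$ is $d'$-homogeneous with respect to $\pi'$, where $d' \defeq d - 2s$, since $\omega'_{ij}\neq 0$ requires $\omega_{ij} \neq 0$, hence $d_i + d_j = d$, equivalent to $d'_i + d'_j = d'$. A short block-matrix computation then shows that condition~\eqref{eq:Xi_condition} holds for $\omega'$ with scalar $c' = c\,\zeta^{2s(d-s)}$, and the compatibility $\conj{c'}/c' = \zeta^{(d')^2}$ follows from $\conj{c}/c = \zeta^{d^2}$. The unitarity relations~\eqref{eq:u_isometry} and~\eqref{eq:u_coisometry} involve only the generators and are identical on both sides. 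For the invariance relation~\eqref{eq:xi_invariant}, substituting $\omega'_{ik} = \zeta^{-s d_k} \omega_{ik}$ and using the homogeneity of $\omega$ (so $d_k = d - d_i$ where $\omega_{ik} \neq 0$ and $d_k = d - d_j$ where $\omega_{kj} \neq 0$), a direct calculation shows that both sides of~\eqref{eq:xi_invariant} for $(V,\pi',\omega')$ differ from those for $(V,\pi,\omega)$ by a common scalar factor $\zeta^{-s d_j - s d + s^2}$, so the relations are equivalent.

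The identity map on generators therefore extends to a unital $*$-homomorphism $\Phi\colon A_o(V,\pi,\omega) \to A_o(V,\pi',\omega')$, which is $\T$-equivariant and a Hopf $*$-homomorphism by the observations of the first paragraph. For the inverse I would apply the same construction with $s$ replaced by $-s$ to $(V,\pi',\omega')$; this produces $(V,\pi,(\omega')')$ with $(\omega')' = \zeta^{-s^2}\omega$. Since scaling $\omega$ by a nonzero scalar multiplies both sides of~\eqref{eq:xi_invariant} by the same factor, $A_o(V,\pi,\zeta^{-s^2}\omega) = A_o(V,\pi,\omega)$ as braided quantum groups, and this yields the inverse $\Phi^{-1}$. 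The main obstacle is bookkeeping the various exponents of $\zeta$ in the invariance-relation comparison of the previous paragraph; everything else is formal.
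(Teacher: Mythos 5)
Your proposal is correct and follows essentially the same route as the paper: substitute the shifted data into each defining relation, use the homogeneity of $\omega$ to show that both sides of~\eqref{eq:xi_invariant} pick up the common factor $\zeta^{-s d_j - s d + s^2}$, and observe that the $\T$\nb-action and comultiplication formulas are unchanged since $d_i'-d_j'=d_i-d_j$. Your extra verification of~\eqref{eq:Xi_condition} for $(V,\pi',\omega')$ and the explicit inverse via $s\mapsto -s$ are consistent with (and slightly more detailed than) what the paper records.
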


\begin{proof}
  When we replace \(\pi,\omega\) by \(\pi',\omega'\), then we
  replace~\(\omega_{i k}\) by \(\zeta^{-s d_k} \omega_{i k}\), \(d_i\) by
  \(d_i'= d_i-s\), and~\(d\) by \(d'=d-2 s\) because~\(\omega'\) has
  degree \(d-2 s\) for the representation~\(\pi'\).  The block
  matrix~\(\Omega\) in the introduction becomes
  \(\Omega'_{i j} = \zeta^{-s\cdot j}\cdot\Omega_{i j}\).
  Equation~\eqref{eq:Xi_condition} for~\(\Omega\) implies
  \[
    \conj{\Omega'_{i,d-i}} \cdot \Omega'_{d-i,i}
    = \zeta^{s (d-i) -i} \conj{\Omega_{i,d-i}} \cdot \Omega_{d-i,i}
    = \zeta^{s d - 2 s i}\cdot c\cdot \zeta^{d\cdot i}\cdot 1
    = c'\cdot \zeta^{d'\cdot i}\cdot 1
  \]
  with \(c' \defeq \zeta^{s d} c\).  Thus \(A_o(V,\pi',\omega')\) is
  defined.

  The effect of our substitutions on~\eqref{eq:xi_invariant} is that
  the two sides \(\zeta^{d_j d_i} \sum_{k=1}^n \omega_{i k} u_{j k}\)
  and
  \(\zeta^{d_j (d-d_j)} \sum_{k=1}^n \omega_{k j} u_{k i}^* =
  \sum_{k=1}^n \zeta^{d_j d_k} \omega_{k j} u_{k i}^*\) are multiplied
  by \(\zeta^{-s(d_i+d_j-s)- s d_k} = \zeta^{-s(d_j+d-s)}\) and
  \(\zeta^{-s(d_j + d_k -s)- s d_j} = \zeta^{-s(d_j+d-s)}\),
  respectively; here we use that \(d_j + d_k = d\) if
  \(\omega_{j k}\neq0\).  Since the two factors are the same and do not
  involve the summation index~\(k\) after simplification, the
  relation~\eqref{eq:xi_invariant} holds for \((\pi,\omega)\) if and
  only if it holds for \((\pi',\omega')\).  The same is obvious for the
  other two relations \eqref{eq:u_isometry}
  and~\eqref{eq:u_coisometry}.  Hence the generators~\(u_{ij}\) of
  \(A_o(V,\pi,\omega)\) and \(A_o(V,\pi',\omega')\) are subjected to the
  same relations.  The formulas for the \(\T\)\nb-action and the
  comultiplication in \eqref{eq:beta_on_u}
  and~\eqref{eq:comultiplication_u} also remain the same because
  \(d_i' - d_j' = d_i - d_j\) for all \(1 \le i,j \le n\).  So the
  canonical map \(A_o(V,\pi,\omega)\to A_o(V,\pi',\omega')\) is a
  \(\T\)\nb-equivariant Hopf \Star{}isomorphism.
\end{proof}

The formula for~\(\omega'\) in Lemma~\ref{lem:shift_degrees} may be
understood using the universal property in
Theorem~\ref{the:universal_Ao}.  Let \(\G=(B,\beta,\Comult)\) be
another braided quantum group.  The \(\T\)\nb-equivariant Hopf
\Star{}isomorphism in Lemma~\ref{lem:shift_degrees} means that
\(U\in \Bound(V) \otimes B\) is a representation of~\(\G\) on
\((V,\pi)\) for which \(\omega\in V\otimes V\) is \(d\)\nb-homogeneous
and \(\G\)\nb-invariant if and only if \(U\in \Bound(V) \otimes B\)
is a representation of~\(\G\) on \((V,\pi')\) for which
\(\omega\in V\otimes V\) is \(d'\)\nb-homogeneous and
\(\G\)\nb-invariant.  A \(d\)\nb-homogeneous invariant vector
corresponds to an intertwining operator
\(\omega\colon \C_d \to V\otimes V\).  And such an intertwining
operator is equivalent to an intertwining operator
\[
  \C_{d-2 s} \cong \C_{-s} \otimes \C_d \otimes \C_{-s}
  \xrightarrow{1\otimes\omega\otimes1}
  \C_{-s} \otimes V\otimes V \otimes \C_{-s}
  \xrightarrow[\cong]{\Braiding{V}{\C_{-s}}_{34}}
  (\C_{-s} \otimes V)\otimes (\C_{-s} \otimes V).
\]
Now \(\C_{-s} \otimes V\) is~\(V\) with the shifted
representation~\(\pi'\) and the same~\(U\), exactly as needed.  The
braiding operator \(\Braiding{V}{\C_{-s}}\) maps the basis vector
\(e_j \otimes 1\) of \(V\otimes \C_{-s}\) to
\(\zeta^{-s d_j}\cdot 1 \otimes e_j\).  This is the origin of the
factors~\(\zeta^{-s d_j}\) in the definition of~\(\omega'\).

The isomorphism in Lemma~\ref{lem:shift_degrees} allows to reduce
the case of even~\(d\) to \(d=0\), up to a canonical
\(\T\)\nb-equivariant Hopf \Star{}isomorphism.  We shall use this
simplification later to describe the representation category of
\(A_o(V,\pi,\omega)\).

The case \(d=1\) is particularly important because it occurs for
\(\qSU\).  We can also reduce this case to \(d=0\) if we replace the
circle~\(\T\) by a double cover.

Namely, we may compose the representation~\(\pi\) with the
endomorphism \(z\mapsto z^2\) of~\(\T\).  We may also use any other
endomorphism, but only this two-fold covering map will be important
later.  Replacing~\(\pi\) by \(\pi''_z = \pi_{z^2}\) replaces
\(d_i\) by~\(2 d_i\) for \(1\le i \le n\).  The \(d\)\nb-homogeneous
vector~\(\omega\) for~\(\pi\) is \(d''\)\nb-homogeneous for~\(\pi''\)
with \(d'' \defeq 2 d\).  In order to keep the braiding unitaries
the same, we replace~\(\zeta\) by a fourth root
\(\zeta'' = \sqrt[4]{\zeta}\).  We leave the vector \(\omega'' = \omega\)
unchanged.  Then the relations
\eqref{eq:u_isometry}--\eqref{eq:xi_invariant} for \((V,\pi,\omega)\)
and \((V,\pi'',\omega)\) are the same.
And~\eqref{eq:comultiplication_u} is also the same.  But the
action~\(\beta''\) on \(A_o(V,\pi'',\omega)\) defined
in~\eqref{eq:beta_on_u} becomes \(\beta''_z = \beta_{z^2}\).  Thus
there is a Hopf \Star{}isomorphism
\(\varphi\colon A_o(V,\pi,\omega) \congto A_o(V,\pi'',\omega)\) that maps
the generator~\(u_{ij}\) of \(A_o(V,\pi,\omega)\) to
\(u_{ij} \in A_o(V,\pi'',\omega)\).  But~\(\varphi\) is not
\(\T\)\nb-equivariant.  Instead, it satisfies
\(\beta''_z\circ \varphi = \varphi \circ \beta_{z^2}\) for all
\(z\in\T^2\).  Thus the braided quantum groups \(A_o(V,\pi,\omega)\)
and \(A_o(V,\pi'',\omega)\) are different.  They have different
representations.  Namely, representations of \(A_o(V,\pi,\omega)\) are
equivalent to those representations of \(A_o(V,\pi'',\omega)\) where the
underlying representation of~\(\T\) factors through the map
\(\T\to\T\), \(z\mapsto z^2\).

Since \(d'' = 2d\) is even, Lemma~\ref{lem:shift_degrees} for
\(s=d\) identifies \(A_o(V,\pi'',\omega)\) with \(A_o(V,\pi',\omega')\)
for \(\pi'_z = z^{-d} \pi''_z = z^{-d} \pi_{z^2}\),
\(\zeta' = \zeta'' = \sqrt[4]{\zeta}\), and
\(\omega'_{i j} = (\zeta')^{-d d_j''} \omega_{i j} = \zeta^{-d d_j/2}
\omega_{i j}\).  This gives a braided free orthogonal quantum group
with \(d'=0\).

\section{The bosonisation}
\label{sec:semidirect_product}

The bosonisation of a braided quantum group is an ordinary quantum
group together with an idempotent quantum group homomorphism
(``projection'').  Conversely, any quantum group with projection is
the bosonisation of a braided quantum group
(see~\cite{Meyer-Roy-Woronowicz:Qgrp_proj}).  In the compact case,
the bosonisation is constructed in
\cite{Meyer-Roy-Woronowicz:Twisted_tensor_2}*{Corollary~6.5}, where
it is called ``semidirect product''.  When we specialise this
construction to the braided quantum group \(A_o(V,\pi,\omega)\), we
get the following:

\begin{theorem}
  \label{the:semidirect_product}
  Let \((C,\Comult[C])\) be the bosonisation 
  associated to the braided quantum group~\(A_o(V,\pi,\omega)\).
  Then~\(C\) is isomorphic to the universal unital
  \(\Cst\)\nb-algebra generated by elements \(z\) and \(u_{i j}\)
  for \(1 \le i,j \le n\) subject to the relations
  \(z z^* = 1 = z^* z\),
  \eqref{eq:u_isometry}--\eqref{eq:xi_invariant} and the commutation
  relations \(z u_{i j} = \zeta^{d_i-d_j} u_{i j} z\) for all
  \(1 \le i,j \le n\).  The comultiplication~\(\Comult[C]\) is given
  by
  \[
    \Comult[C](z) = z\otimes z,\qquad
    \Comult[C](u_{i k})
    = \sum_{l=1}^n u_{i l} \otimes z^{d_l-d_i} u_{l k}.
  \]
\end{theorem}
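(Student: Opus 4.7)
The plan is to invoke the general bosonisation construction of \cite{Meyer-Roy-Woronowicz:Twisted_tensor_2}*{Corollary~6.5} for the braided quantum group \(A_o(V,\pi,\omega)\) with \(\T\)\nb-action~\(\beta\), and then identify the resulting \(\Cst\)\nb-algebra and comultiplication explicitly in terms of generators and relations.

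First, I would recall that the bosonisation~\(C\) of a braided compact quantum group over~\(\T\) is, as a \(\Cst\)\nb-algebra, a \(\zeta\)\nb-twisted crossed product of \(\Cont(\T) = \Cst(\Z)\) with the underlying \(\Cst\)\nb-algebra \(A_o(V,\pi,\omega)\), generated by a canonical copy of each factor. Let~\(z\) be the canonical unitary generator of \(\Cont(\T)\subseteq C\) and treat the generators \(u_{i j}\) of \(A_o(V,\pi,\omega)\) as elements of~\(C\) via its canonical embedding. Since these embeddings are \(\Cst\)\nb-algebra morphisms, the relations \eqref{eq:u_isometry}--\eqref{eq:xi_invariant} continue to hold in~\(C\), and \(z z^* = 1 = z^* z\). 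The commutation relation between~\(z\) and a \(\beta\)\nb-homogeneous element of degree~\(k\) in \(A_o(V,\pi,\omega)\) comes from the Rieffel twist that defines the bosonisation: it reads \(z b = \zeta^{-k} b z\). Applying this to \(u_{i j}\), which is \(\beta\)\nb-homogeneous of degree \(d_j - d_i\) by~\eqref{eq:beta_on_u}, gives the stated relation \(z u_{i j} = \zeta^{d_i - d_j} u_{i j} z\).

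Next, I would verify that~\(C\) is universal for these relations. The canonical inclusions make~\(C\) into a \(\Cst\)\nb-algebra generated by \(z\) and the~\(u_{i j}\), so the universal \(\Cst\)\nb-algebra with the stated presentation surjects onto~\(C\). Conversely, in the universal \(\Cst\)\nb-algebra with these generators and relations, the elements \(u_{i j}\) satisfy \eqref{eq:u_isometry}--\eqref{eq:xi_invariant}, so by the universal property of Theorem~\ref{the:universal_Ao} they generate a quotient of \(A_o(V,\pi,\omega)\); similarly, the unitary~\(z\) generates a copy of \(\Cont(\T)\); the commutation relation makes the product of these two subalgebras into a twisted crossed product, and this must coincide with the bosonisation~\(C\) by the universal property of the bosonisation construction. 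This gives the required isomorphism.

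For the comultiplication, I would use the general formula for the comultiplication on a Radford-Majid bosonisation: it combines the comultiplication of~\(\Cont(\T)\) (which satisfies \(\Comult[C](z) = z\otimes z\) since \(\Cont(\T)\) is a quantum subgroup) with the braided comultiplication~\(\Comult\) on \(A_o(V,\pi,\omega)\), inserting appropriate powers of~\(z\) to account for the coaction of \(\Cont(\T)\) on \(A_o(V,\pi,\omega)\) induced by~\(\beta\). Concretely, under the embedding \(A_o(V,\pi,\omega) \boxtimes_\zeta A_o(V,\pi,\omega) \hookrightarrow C\otimes C\), the two canonical morphisms translate as \(j_1(a) \mapsto a\otimes 1\) and, for \(\beta\)\nb-homogeneous \(b\) of degree~\(k\), \(j_2(b)\mapsto z^{k}\otimes b\); the formula \eqref{eq:comultiplication_u} with \(u_{l k}\) of degree \(d_k - d_l\) then yields
\[
  \Comult[C](u_{i k}) = \sum_{l=1}^n (u_{i l}\otimes 1)(z^{d_k - d_l}\otimes u_{l k})
  = \sum_{l=1}^n u_{i l} z^{d_k - d_l} \otimes u_{l k}
  = \sum_{l=1}^n u_{i l}\otimes z^{d_l - d_i} u_{l k},
\]
where in the last step I commute \(z^{d_k - d_l}\) past \(u_{i l}\) using the commutation relation, and simplify using \(\Comult[C](z)=z\otimes z\) to shift the exponents. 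The counit property gives a consistency check: applying \((\Id\otimes\epsilon)\) and \((\epsilon\otimes\Id)\) both recover~\(u_{i k}\).

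The main obstacle will be carefully tracking the conventions and signs in the embedding of the braided tensor product into \(C\otimes C\): the factors~\(\zeta^{d_i - d_j}\) in the commutation relation and the exponents~\(d_l - d_i\) in the comultiplication arise from the same twist cocycle, and checking that the signs are consistent between \eqref{eq:comultiplication_u}, the \(\beta\)\nb-degrees of the \(u_{i j}\), and the \(\zeta\)\nb-twist requires a short but careful computation. Once this identification is made, coassociativity of \(\Comult[C]\) follows from coassociativity of \(\Comult\) together with coassociativity of \(\Comult[\Cont(\T)]\), both of which are already established.
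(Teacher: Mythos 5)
Your overall route is the same as the paper's: realise the bosonisation spatially as $\Cont(\T)\boxtimes A_o(V,\pi,\omega)$, identify it with a crossed product of $A_o(V,\pi,\omega)$ by the $\Z$\nobreakdash-action generated by $\beta_{\zeta^{-1}}$ to obtain the presentation and the commutation relation $z u_{ij}=\zeta^{d_i-d_j}u_{ij}z$, and then read off $\Comult[C]$ from the general formula of \cite{Meyer-Roy-Woronowicz:Twisted_tensor_2}. The presentation and the commutation relation are handled correctly (both you and the paper quietly rely on full and reduced crossed products by~$\Z$ coinciding, so that the spatially defined braided tensor product has the claimed universal property).

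There is, however, a genuine gap in the comultiplication computation. In the construction of \cite{Meyer-Roy-Woronowicz:Twisted_tensor_2}, $\Comult[C]$ restricted to $B=A_o(V,\pi,\omega)$ is $\psi\circ(\Id\boxtimes\Comult)$, where $\psi$ sends the \emph{first} braided copy of~$B$ to $(j_2\otimes j_1)\beta(b)$, that is, $b\otimes z^{k}$ for $b$ homogeneous of degree~$k$ (the power of~$z$ sits in the \emph{second} tensor factor, recording the coaction), and the second braided copy to $1\otimes b$. Multiplying out then gives $\sum_l (u_{il}\otimes z^{d_l-d_i})(1\otimes u_{lk})=\sum_l u_{il}\otimes z^{d_l-d_i}u_{lk}$ with no commutation needed. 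Your convention, $j_1(a)\mapsto a\otimes 1$ and $j_2(b)\mapsto z^{k}\otimes b$, instead produces $\sum_l u_{il}z^{d_k-d_l}\otimes u_{lk}$, and your final step --- ``commute $z^{d_k-d_l}$ past $u_{il}$ and use $\Comult[C](z)=z\otimes z$ to shift the exponents'' --- is not a legitimate manipulation. Commuting only yields $\sum_l \zeta^{(d_l-d_i)(d_k-d_l)}z^{d_k-d_l}u_{il}\otimes u_{lk}$, and there is no identity in $C\otimes C$ that moves a power of~$z$ from one tensor leg to the other: since $C\cong A_o(V,\pi,\omega)\rtimes\Z$ is graded by the powers of~$z$, the product $C\otimes C$ is $\Z^2$\nobreakdash-graded, and $u_{il}z^{d_k-d_l}\otimes u_{lk}$ lies in degree $(d_k-d_l,0)$ while $u_{il}\otimes z^{d_l-d_i}u_{lk}$ lies in degree $(0,d_l-d_i)$, so the two expressions are different elements of $C\otimes C$ whenever the~$d_i$ are not all equal. (Your expression does pass the counit check, which is why that check did not detect the error.) To repair the argument, use the correct formula for~$\psi$ on the two braided copies of~$B$; the stated formula for $\Comult[C](u_{ik})$ then drops out immediately.
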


\begin{proof}
  In the notation of~\cite{Meyer-Roy-Woronowicz:Twisted_tensor_2},
  we have the following data.  First, \(A= \Cont(\T)\) with the
  usual comultiplication from the Abelian group structure on~\(\T\)
  and \(B= A_o(V,\pi,\omega)\), equipped with the continuous coaction
  \(\beta\colon B\to B\otimes A\) defined by~\eqref{eq:beta_on_u}.
  The braided tensor product
  in~\cite{Meyer-Roy-Woronowicz:Twisted_tensor_2} is defined for
  Yetter--Drinfeld algebras over~\(A\), which means here algebras
  with an action of the group \(\T\times\hat{\T} = \T\times\Z\).
  The relevant \(\Z\)\nb-action on~\(B\) is defined by
  composing~\(\beta\) with the homomorphism \(\Z \to \T\),
  \(n\mapsto \zeta^n\).  This ensures that the braiding unitaries
  used in~\cite{Meyer-Roy-Woronowicz:Twisted_tensor_2} to define the
  braided tensor product are the same ones as in
  \eqref{eq:Braiding_L_V}, \eqref{eq:Braiding_L_L}
  and~\eqref{eq:Braiding_LL_V}.

  Represent \(\Cont(\T)\) faithfully on \(L^2(\T)\) by pointwise
  multiplication, and represent \(A_o(V,\pi,\omega)\) faithfully and
  \(\T\)\nb-equivariantly on~\(\Hils[L]\) as in the proof of
  Proposition~\ref{pro:universal_property}.  Defined the braiding
  operators
  \[
    \Braiding{\Hils[L]}{L^2(\T)}(\lambda_m \otimes z^k) \defeq
    \zeta^{k\cdot l_m} \cdot z^k \otimes \lambda_m,\qquad
    \Dualbraiding{L^2(\T)}{\Hils[L]} (z^k \otimes \lambda_m) \defeq
    \zeta^{-k\cdot l_m} \cdot \lambda_m \otimes z^k,
  \]
  using the \(\T\)\nb-homogeneous basis~\((\lambda_m)\)
  of~\(\Hils[L]\).  The underlying \(\Cst\)\nb-algebra of the
  bosonisation is the braided tensor product
  \begin{multline*}
    C \defeq \Cont(\T) \boxtimes A_o(V,\pi,\omega)
    \\= (\Cont(\T) \otimes 1) \cdot \Braiding{\Hils[L]}{L^2(\T)}
    \cdot (A_o(V,\pi,\omega) \otimes 1)
    \cdot \Dualbraiding{L^2(\T)}{\Hils[L]}
    \subseteq \Bound(L^2(\T) \otimes \Hils[L]).
  \end{multline*}
  Let \(z \in \Cont(\T) \subseteq \Bound(L^2\T)\) denote the unitary
  operator of pointwise multiplication with the identity function.
  This generates \(\Cont(\T)\).  Since \(A_o(V,\pi,\omega)\) is
  generated by the matrix coefficients~\(u_{ij}\) of~\(u\), the
  \(\Cst\)\nb-algebra~\(C\) is generated by the operators
  \(j_1(z) \defeq z\otimes 1\) and
  \(j_2(u_{ij}) \defeq \Braiding{\Hils[L]}{L^2(\T)} \cdot (u_{ij}
  \otimes 1) \cdot \Dualbraiding{L^2(\T)}{\Hils[L]}\), which acts on
  \(L^2(\T) \otimes \Hils[L]\) by
  \begin{align}
    \label{eq:j2_u}
    j_2(u_{ij})(z^k \otimes \lambda_m)
    &= \bigl(\Braiding{\Hils[L]}{L^2(\T)} \cdot (u_{ij} \otimes 1) \cdot
    \Dualbraiding{L^2(\T)}{\Hils[L]}\bigr) (z^k \otimes \lambda_m)
    \\&= \zeta^{-k l_m} \cdot
    \Braiding{\Hils[L]}{L^2(\T)}(u_{ij} \lambda_m \otimes z^k) \notag
    \\&= \zeta^{k(d_j-d_i)} z^k \otimes u_{ij} \lambda_m. \notag
  \end{align}
  This implies the commutation relation
  \[
    j_1(z) j_2(u_{ij}) j_1(z^*)
    = \zeta^{-(d_j-d_i)}\cdot j_2(u_{ij})
    = j_2(\beta_{\zeta^{-1}}(u_{ij})).
  \]
  Thus the unitaries \(j_1(z^n)\) for \(n\in\Z\) and the
  representation~\(j_2\) of \(A_o(V,\pi,\omega)\) form a covariant
  representation for the \(\Z\)\nb-action on \(A_o(V,\pi,\omega)\)
  generated by the automorphism~\(\beta_{\zeta^{-1}}\).  This
  covariant representation is unitarily equivalent to the regular
  representation that defines the reduced crossed product.  So
  \[
    C = \Cont(\T) \boxtimes A_o(V,\pi,\omega)
    \cong A_o(V,\pi,\omega) \rtimes \Z
  \]
  for the \(\Z\)\nb-action on~\(A_o(V,\pi,\omega)\) generated
  by~\(\beta_{\zeta^{-1}}\).  This crossed product
  \(\Cst\)\nb-algebra is also the universal unital
  \(\Cst\)\nb-algebra generated by elements
  \((u_{i j})_{1\le i,j\le n}\) and~\(z\) subject to the relations
  \eqref{eq:u_isometry}--\eqref{eq:xi_invariant}, the relation
  \(z z^* = 1 = z^* z\) saying that~\(z\) is unitary, and the
  commutation relation
  \[
    z u_{i j} = \zeta^{d_i-d_j} u_{i j} z.
  \]
  The comultiplication \(\Comult[C]\colon C \to C\otimes C\) is
  defined in~\cite{Meyer-Roy-Woronowicz:Twisted_tensor_2} as the
  composite of
  \[
    \Id_A \boxtimes\Comult\colon A \boxtimes B
    \to A \boxtimes B \boxtimes B
  \]
  and the unique \Star{}homomorphism
  \(\psi\colon A \boxtimes B \boxtimes B \to (A \boxtimes B)\otimes
  (A \boxtimes B)\) with
  \[
    \psi(j_1(a)) = (j_1 \otimes j_1)\Comult[A](a),\qquad
    \psi(j_2(b)) = (j_2 \otimes j_1)\beta(b),\qquad
    \psi(j_3(b)) = 1 \otimes j_2(b)
  \]
  for all \(a\in A\), \(b\in B\).  Here~\(j_l\) denotes the
  embedding of the \(l\)th factor in a braided tensor product, and
  \(\beta\colon B \to B\otimes A\) is the \(\T\)\nb-action on~\(B\).
  Using \(\Comult[A](z) = z\otimes z\)
  and~\eqref{eq:comultiplication_u}, we compute
  \begin{align*}
    \Comult[C](z)
    &= z\otimes z,\\
    \Comult[C](u_{i k})
    &= \psi\left(\sum_{l=1}^n j_2(u_{i l}) j_3(u_{l k})\right)
      = \sum_{l=1}^n (j_2\otimes j_1)(u_{i l} \otimes z^{d_l-d_i})
      \cdot (1\otimes j_2(u_{l k}))
    \\&= \sum_{l=1}^n j_2(u_{i l}) \otimes j_1(z^{d_l-d_i}) j_2(u_{l k}).
  \end{align*}
  This gives the formulas for~\(\Comult[C]\) in the theorem when we
  drop the inclusion maps \(j_1\) and~\(j_2\) from our notation as
  above.
\end{proof}

A sanity check for the formulas in
Theorem~\ref{the:semidirect_product} is that~\(\Comult[C]\) is
coassociative:
\begin{align*}
  (\Id \otimes \Comult[C]) \Comult[C](z)
  = (\Comult[C] \otimes \Id) \Comult[C](z)
  &= z\otimes z \otimes z,\\
  (\Id \otimes \Comult[C]) \Comult[C](u_{i k})
  = (\Comult[C] \otimes \Id) \Comult[C](u_{i k})
  &= \sum_{l,m=1}^n
    u_{i l} \otimes z^{d_l - d_i} u_{l m} \otimes z^{d_m - d_i} u_{m k}.
\end{align*}
It can also be checked directly that~\(\Comult[C]\) is well defined,
that is, the elements \(\Comult[C](z)\) and \(\Comult[C](u_{i k})\)
of \(C\otimes C\) verify the relations in the universal property
of~\(C\).

Representations of the braided quantum group \(A_o(V,\pi,\omega)\) are
equivalent to representations of the bosonisation by
\cite{Meyer-Roy:Braided_mu}*{Theorem~3.4}.  Here a representation of
\(A_o(V,\pi,\omega)\) on a Hilbert space~\(\Hils\) is a pair
\((\Corep{S},\Corep{U})\), where
\(\Corep{S} \in \U(\Comp(\Hils) \otimes \Cont(\T)) \subseteq
\U(\Hils \otimes L^2(\T))\) is a representation of~\(\T\) and
\(\Corep{U} \in \U(\Comp(\Hils) \otimes A_o(V,\pi,\omega)) \subseteq
\U(\Hils \otimes \Hils[L])\) is a representation of
\(A_o(V,\pi,\omega)\).  By \cite{Meyer-Roy:Braided_mu}*{Theorem~3.4},
the corresponding representation of
\(C\subseteq \Bound(L^2(\T) \otimes \Hils[L])\) is
\[
  \Corep{S}_{12} (\Corep{U} \tenscorep 1_{L^2(\T)})
  = \Corep{S}_{12} \cdot \Braiding{\Hils[L]}{L^2(\T)}_{23} \cdot
  \Corep{U}_{12} \cdot \Dualbraiding{L^2(\T)}{\Hils[L]}_{23}
  \in \U(\Hils \otimes L^2(\T) \otimes \Hils[L]).
\]
In particular, the fundamental representation of~\(A_o(V,\pi,\omega)\)
on~\(V\) is given by the pair \((\Corep{S}^V,\Corep{U}^V)\) with
\[
  \Corep{S}^V(e_i \otimes z^k) = e_i \otimes z^{d_i+k},\qquad
  \Corep{U}^V(e_i \otimes \lambda_m)
  = \sum_{k=1}^n e_k \otimes u_{k i} \lambda_m.
\]
The corresponding representation~\(t\) of~\(C\) is given by the
formula
\begin{multline*}
  t(e_i \otimes z^k \otimes \lambda_m)
  = \Corep{S}_{12} \Braiding{\Hils[L]}{L^2(\T)}_{23}
  \Corep{U}_{12} \Dualbraiding{L^2(\T)}{\Hils[L]}_{23}
  (e_i \otimes z^k \otimes \lambda_m)
  \\= \zeta^{-k l_m} \Corep{S}_{12} \Braiding{\Hils[L]}{L^2(\T)}_{23}
  \Corep{U}_{12} (e_i \otimes \lambda_m \otimes z^k)
  = \sum_{h=1}^n \zeta^{-k l_m} \Corep{S}_{12}
  \Braiding{\Hils[L]}{L^2(\T)}_{23} (e_h \otimes u_{h i}\lambda_m
  \otimes z^k)
  \\= \sum_{h=1}^n \zeta^{-k l_m+ k (l_m + d_i - d_h)} \Corep{S}_{12}
  (e_h \otimes z^k \otimes u_{h i}\lambda_m)
  = \sum_{h=1}^n \zeta^{k (d_i - d_h)}
  (e_h \otimes z^{k+d_h} \otimes u_{h i}\lambda_m).
\end{multline*}
This and~\eqref{eq:j2_u} show that \(t \in \Mat_n(C)\) has matrix
entries \(t_{h i} = j_1(z^{d_h}) j_2(u_{h i})\) or briefly
\[
  t_{h i} = z^{d_h} u_{h i}.
\]
A sanity check is that~\(t\) is a representation of~\(C\).
Indeed, we compute that \(\Comult[C](t) = t_{12} t_{13}\) is the
matrix over \(C\otimes C\) with \(i,k\)th entry
\[
  \sum_{m=1}^n z^{d_i} u_{i m} \otimes z^{d_m} u_{m k}.
\]

Since \(t_{i j} = z^{d_i} u_{i j}\) and
\(z^{-d_i} t_{i j} = u_{i j}\), we may also describe~\(C\) as the
universal unital \(\Cst\)\nb-algebra generated by \(z\)
and~\(t_{i j}\) for \(1 \le i, j \le n\).  These are subject to the
following relations.  First, \(z\) and the matrix \(t = (t_{i j})\)
are unitary.  Secondly, \(z t_{i j} = \zeta^{d_i - d_j} t_{i j} z\)
for \(1 \le i, j \le n\).  And third, \eqref{eq:xi_invariant}
becomes
\[
  \zeta^{d_j d_i} \sum_{k=1}^n \omega_{i k} z^{-d_j} t_{j k}
  = \zeta^{d_j (d-d_j)} \sum_{k=1}^n \omega_{k j} (z^{-d_k} t_{k i})^*
  = \zeta^{(d-d_i) (d-d_j)} \sum_{k=1}^n \omega_{k j} z^{d-d_j} t_{k i}^*
\]
because \(\omega_{k j}=0\) unless \(d_k + d_j = d\) and
\(t_{k i}^* z^{d-d_j} = \zeta^{(d-d_j)\cdot (d_k - d_i)} z^{d-d_j}
t_{k i}^*\).  Cancelling~\(z^{-d_j}\) and multiplying with
\(\zeta^{(d-d_j) d_i}\), this becomes
\begin{equation}
  \label{eq:xi_invariant_for_t}
  \sum_{k=1}^n t_{j k} (\zeta^{d d_i} \omega_{i k})
  = z^d\sum_{k=1}^n (\zeta^{d d_k} \omega_{k j}) t_{k i}^*.
\end{equation}

Since \((C,\Comult[C])\) is a bosonisation of a
braided quantum group, it comes with quantum group morphisms
\((C,\Comult[C]) \leftrightarrow (A,\Comult[A])\) whose composite
makes \((C,\Comult[C])\) a \(\Cst\)\nb-quantum group with projection
(see~\cite{Meyer-Roy-Woronowicz:Qgrp_proj}).  We also describe these
quantum group morphisms explicitly:

\begin{proposition}
  \label{pro:projection_on_C}
  The quantum group morphisms
  \((C,\Comult[C]) \leftrightarrow (A,\Comult[A])\) are given by the
  Hopf \Star{}homomorphisms \(\iota\colon A\to C\) and
  \(\pi\colon C \to A\) defined by \(\iota(z) \defeq z\),
  \(\pi(z)\defeq z\) and \(\pi(u_{i j}) \defeq \delta_{i j}\).
\end{proposition}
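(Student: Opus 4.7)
The plan is to verify directly that the formulas for $\iota$ and $\pi$ extend to well-defined unital \Star{}homomorphisms compatible with the comultiplications, and that $\pi\circ\iota = \Id_A$. Once this is done, the general theory of bosonisations (see~\cite{Meyer-Roy-Woronowicz:Qgrp_proj}) identifies $\iota$ and $\pi$ with the canonical morphisms supplied by the construction in Theorem~\ref{the:semidirect_product}, since these are uniquely characterised by their action on generators.

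First I would handle $\iota$. Since $A = \Cont(\T)$ is the universal unital \Cst\nobreakdash-algebra on one unitary generator and $z \in C$ is unitary by the defining relation $z z^* = 1 = z^* z$, sending the generator of $A$ to $z \in C$ extends uniquely to a unital \Star{}homomorphism $\iota\colon A\to C$. Compatibility with comultiplications is immediate, as both $\Comult[A](z)$ and $\Comult[C](z)$ equal $z\otimes z$.

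Next I would check that $\pi$ respects the defining relations of $C$. The orthogonality relations \eqref{eq:u_isometry}--\eqref{eq:u_coisometry} reduce to $\sum_k \delta_{ki}\delta_{kj} = \delta_{ij}$, and the commutation relation $z u_{ij} = \zeta^{d_i-d_j} u_{ij} z$ is trivial on Kronecker deltas because only $i=j$ (where the exponent vanishes) contributes. The main (and only mildly nontrivial) check is the invariance relation~\eqref{eq:xi_invariant}: upon substitution it reduces to the scalar identity $\zeta^{d_j d_i}\omega_{ij} = \zeta^{d_j(d-d_j)}\omega_{ij}$, which holds because homogeneity of $\omega$ forces $d_i = d - d_j$ on the support of $\omega_{ij}$. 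Hence $\pi$ is a well-defined unital \Star{}homomorphism. Compatibility with comultiplications then follows from
\[
  (\pi\otimes\pi)\Comult[C](u_{ik})
  = \sum_{l=1}^n \delta_{il}\otimes z^{d_l-d_i}\delta_{lk}
  = \delta_{ik}\otimes 1
  = \Comult[A](\delta_{ik}),
\]
where the middle equality uses that $\delta_{il}\delta_{lk}\neq 0$ forces $l = i = k$, making $z^{d_l-d_i} = 1$.

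Finally, $\pi\circ\iota(z) = z$ gives $\pi\circ\iota = \Id_A$, which is precisely the projection property, identifying $(C,\Comult[C])$ as a \Cst\nobreakdash-quantum group with projection in the sense of~\cite{Meyer-Roy-Woronowicz:Qgrp_proj}. The whole argument is a routine verification on generators; the only substantive point is the vanishing of $\omega_{ij}$ off the diagonal $d_i+d_j = d$, which is where the homogeneity of $\omega$ is essential.
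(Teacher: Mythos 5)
Your computational verifications are all correct: the relations \eqref{eq:u_isometry}--\eqref{eq:u_coisometry}, the commutation relation, and \eqref{eq:xi_invariant} are indeed preserved by \(\pi\) (the homogeneity of \(\omega\) is exactly the point for the last one), and the compatibility with the comultiplications is checked correctly. This covers the part of the argument that the paper dismisses in one sentence as ``easy to check.''

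The gap is in the identification step, which is the actual content of the proposition. The statement is not merely that \emph{some} Hopf \Star{}homomorphisms \(\iota\) and \(\pi\) with \(\pi\circ\iota=\Id_A\) exist; it is that the \emph{canonical} quantum group morphisms supplied by the bosonisation construction of~\cite{Meyer-Roy-Woronowicz:Qgrp_proj} are given by these explicit formulas. Those canonical morphisms are defined abstractly (via bicharacters and the multiplicative unitary of the quantum group with projection), not by prescribing values on generators, so your justification that they are ``uniquely characterised by their action on generators'' is circular: any \Star{}homomorphism is determined by its values on generators, but that does not tell you what the canonical morphisms' values are, nor does exhibiting one pair with \(\pi\circ\iota=\Id_A\) rule out others. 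The paper closes this gap by passing through representation categories: by \cite{Meyer-Roy:Braided_mu}*{Propositions 2.15 and 3.5}, the canonical bosonisation morphisms correspond to the functors \(\Corep{S}\mapsto(\Corep{S},1)\) and \((\Corep{S},\Corep{U})\mapsto\Corep{S}\) on representations, and one then computes that the explicit Hopf \Star{}homomorphisms \(\iota\) and \(\pi\) induce exactly these functors. Some argument of this kind (or at least, for \(\iota\), the observation that \(z\in C\) is by construction the image of the canonical embedding \(j_1\) from Theorem~\ref{the:semidirect_product}, together with a genuine computation of the canonical projection \(\pi\)) is needed to complete the proof.
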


\begin{proof}
  We interpret a representation of \((C,\Comult[C])\) as a
  braided representation \((\Corep{S},\Corep{U})\) of
  \(A_o(V,\pi,\omega)\).  Then the functor induced by~\(\iota\) maps a
  representation~\(\Corep{S}\) of \(A=\Cont(\T)\) (that is, a
  representation of~\(\T\)) to the representation
  \((\Corep{S},1)\) of \(A_o(V,\pi,\omega)\) on the same Hilbert space.
  And the functor induced by~\(\pi\) maps a
  representation~\((\Corep{S},\Corep{U})\) of \(A_o(V,\pi,\omega)\)
  to the underlying representation~\(\Corep{S}\) of \(\Cont(\T)\)
  on the same Hilbert space.  By
  \cite{Meyer-Roy:Braided_mu}*{Proposition~2.15}, a functor between
  the representation categories of two \(\Cst\)\nb-quantum groups
  (defined by manageable multiplicative unitaries) is equivalent to
  a quantum group morphism as defined
  in~\cite{Meyer-Roy-Woronowicz:Homomorphisms}.  And it is checked
  in \cite{Meyer-Roy:Braided_mu}*{Proposition~3.5} that the quantum
  group morphisms that correspond to the functors above are the ones
  defined in the study of \(\Cst\)\nb-quantum groups with projection
  in~\cite{Meyer-Roy-Woronowicz:Qgrp_proj}.

  It is easy to check that there are unique \Star{}homomorphisms
  \(\iota\) and~\(\pi\) given by the formulas in the statement of
  the proposition and that these are Hopf \Star{}homomorphisms.
  These Hopf \Star{}homomorphisms induce functors between the
  representation categories of \((C,\Comult[C])\) and
  \((A,\Comult[A])\) that do not change the underlying Hilbert
  space.  A computation shows that the functors on the
  representation categories corresponding to \(\iota\) and~\(\pi\)
  map \(\Corep{S}\) to \((\Corep{S},1)\) and
  \((\Corep{S},\Corep{U})\) to \(\Corep{S}\), respectively.
\end{proof}

\section{The representation category}
\label{sec:rep_category}

We are going to prove Theorem~\ref{the:irrep_tensor}.  In the
previous section, we have described the bosonisation 
\((C,\Comult[C])\) and translated representations of the
braided quantum group \(A_o(V,\pi,\omega)\) to representations of
\((C,\Comult[C])\).  In particular, \(z \in C\) and
\(t\in \Mat_n(C)\) are representations of~\((C,\Comult[C])\).  The
representation~\(z\) corresponds to the trivial representation
of~\(A_o(V,\pi,\omega)\) on the \(1\)\nb-dimensional vector space on
which~\(\T\) acts by the identity character \(\T \to \U(1)\),
\(z\mapsto z\).  The representation~\(t\) corresponds to the
fundamental representation of~\(A_o(V,\pi,\omega)\) on~\(V\).

\begin{corollary}
  \label{cor:CMG}
  The \(\Cst\)\nb-algebra~\(C\) is generated by the
  coefficients of the direct sum representation \(z\oplus t\).  And
  \((C,z\oplus t)\) is a compact matrix quantum group.
\end{corollary}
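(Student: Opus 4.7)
The plan is to verify the two assertions in turn. For the generation statement, Theorem \ref{the:semidirect_product} already exhibits $C$ as generated as a unital \(\Cst\)\nb-algebra by $z$ and the matrix entries $u_{i j}$. Since $t_{i j} = z^{d_i} u_{i j}$ and $z$ is unitary, the relation $u_{i j} = z^{-d_i} t_{i j}$ expresses each original generator $u_{i j}$ as a word in $z$, $z^*$, and $t_{i j}$. Hence the \(\Cst\)\nb-subalgebra of $C$ generated by the coefficients of $z \oplus t$ -- namely $z$ together with all $t_{i j}$ -- contains every $u_{i j}$ and therefore equals~$C$.

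For the compact matrix quantum group structure, I check the three standard axioms for the pair $(C, z \oplus t)$. First, unitarity of $z \oplus t \in \Mat_{n+1}(C)$ is immediate because it is the block-diagonal direct sum of the unitary scalar $z$ and the unitary matrix $t$. Second, the corepresentation property -- that $\Comult[C]$ applied to the $(k,l)$ coefficient of $z \oplus t$ returns $\sum_m (z \oplus t)_{k m} \otimes (z \oplus t)_{m l}$ -- follows from $\Comult[C](z) = z \otimes z$ in Theorem \ref{the:semidirect_product} and the identity $\Comult[C](t_{h i}) = \sum_m t_{h m} \otimes t_{m i}$ established in the discussion before the corollary, because all off-block entries of $z \oplus t$ vanish on both sides.

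Third, it remains to check that the transpose $(z \oplus t)^{\transpose} = z \oplus t^{\transpose}$ is invertible in $\Mat_{n+1}(C)$. Since $z$ is already unitary, this reduces to invertibility of $t^{\transpose}$ in $\Mat_n(C)$. Writing $\conj{t}$ for the entrywise-adjoint matrix with entries $t_{i j}^*$, the identity $(t^*)^{\transpose} = \conj{t}$ allows me to transpose the unitarity relations $t^* t = 1$ and $t t^* = 1$ to obtain $t^{\transpose} \conj{t} = 1$ and $\conj{t}\, t^{\transpose} = 1$. Hence $t^{\transpose}$ is invertible with inverse $\conj{t}$.

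No significant obstacle arises; the corollary is essentially a translation of Theorem \ref{the:semidirect_product} into the language of compact matrix quantum groups, and in particular no further appeal to the orthogonality relation \eqref{eq:xi_invariant_for_t} is required.
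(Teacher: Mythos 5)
Your first two steps (generation of~\(C\) and the corepresentation identity for \(z\oplus t\)) are correct and agree with the paper. The gap is in the third step. Over a noncommutative algebra, transposition is not an antihomomorphism of matrix algebras: \((AB)^{\transpose}_{ij}=\sum_k A_{jk}B_{ki}\) while \((B^{\transpose}A^{\transpose})_{ij}=\sum_k B_{ki}A_{jk}\), and these differ because the entries need not commute. So you cannot ``transpose'' the relations \(t^*t=1\) and \(tt^*=1\) to obtain \(t^{\transpose}\conj{t}=1=\conj{t}\,t^{\transpose}\). In fact, unitarity of a matrix over a noncommutative \(\Cst\)\nb-algebra does \emph{not} imply invertibility of its entrywise adjoint \(\conj{t}\); this failure is exactly why invertibility of the conjugate matrix is a genuine axiom in Woronowicz's definition of a compact matrix quantum group rather than an automatic consequence of unitarity. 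Your closing remark that no appeal to~\eqref{eq:xi_invariant_for_t} is required is therefore precisely where the argument breaks down.

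The paper's proof uses exactly that relation: \eqref{eq:xi_invariant_for_t} is the matrix identity \(t\cdot F = z^d\cdot F\cdot\conj{t}\) with \(F_{ij}=\zeta^{d d_j}\omega_{j i}\), and \eqref{eq:Xi_condition_2} guarantees that~\(F\) is invertible; hence \(\conj{t}=F^{-1}z^{-d}\,t\,F\) is invertible because \(t\) and~\(z\) are unitary, and then \(\conj{z}\oplus\conj{t}\) (equivalently \((z\oplus t)^{\transpose}\)) is invertible. Some input beyond unitarity of~\(t\) is indispensable here --- either the intertwiner~\(F\) coming from the invariant vector~\(\omega\), or the general fact that every unitary corepresentation of a compact quantum group has invertible conjugate, applied to \((C,\Comult[C])\) via Theorem~\ref{the:semidirect_product}. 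You should repair the third step along one of these lines.
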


\begin{proof}
  The coefficients of \(z\oplus t\) are~\(z\) and the
  coefficients~\(t_{ij}\) of~\(t\).  We have seen in the last
  section that these elements generate~\(C\) as a
  \(\Cst\)\nb-algebra.  The representation~\(z\) is a character, so
  that \(\conj{z} = z^* = z^{-1}\).
  Equation~\eqref{eq:xi_invariant_for_t} is equivalent to an
  equality of matrices \(t\cdot F = z^d \cdot F\cdot \conj{t}\) for
  \(t = (t_{i j})_{1 \le i, j \le n}\),
  \(\conj{t} = (t^*_{i j})_{1 \le i,j\le n}\) and \(F = (F_{i j})\)
  with
  \begin{equation}
    \label{eq:F_from_xi}
    F_{i j} \defeq \zeta^{d d_j} \omega_{j i}.
  \end{equation}
  The elements~\(t_{ij}\) are linearly independent for all~
  \(1\leq i,j\leq n\).  Equation~\ref{eq:Xi_condition_2} implies
  that~\(F\) is invertible. 
  Since \(t\) and~\(z\) are unitary, it follows that
  \(\conj{t} = F^{-1} z^{-d} t\cdot F\) is invertible.  Thus
  \(\conj{z} \oplus \conj{t}\) is invertible as well.  And this
  means that~\((C,t\oplus z)\) is a compact matrix quantum group as
  defined in~\cite{Woronowicz:Compact_pseudogroups}.
\end{proof}

The representation theory of compact matrix quantum groups was
studied by Woronowicz in~\cite{Woronowicz:Compact_pseudogroups}.  It
follows that any irreducible representation of~\((C,\Comult[C])\) is
contained in a tensor product of several copies of \(z\), \(t\),
\(\conj{z}\) and~\(\conj{t}\).  The equation
\(t\cdot F = z^d \cdot F\cdot \conj{t} = F\cdot z^d \cdot \conj{t}\)
says that~\(F\) is an intertwiner from \(z^d \cdot \conj{t}\)
to~\(t\).  Equivalently, \(F\) is an intertwiner from \(\conj{t}\)
to~\(z^{-d} t\).  Hence the tensor factor~\(\conj{t}\) is redundant.
The commutation relations \(z t_{ij} = \zeta^{d_i-d_j} t_{ij} z\)
say that~\(\pi_\zeta\), the diagonal matrix with
entries~\(\zeta^{d_i}\), is an intertwiner from \(t\tenscorep z\) to
\(z\tenscorep t\).  Hence
\(t \tenscorep z^k \cong z^k \tenscorep t\) for all \(k\in \Z\).  As
a result, any irreducible representation of~\((C,\Comult[C])\) is a
direct summand in \(z^k \tenscorep t^{\tenscorep l}\) for some
\(k\in\Z\), \(l\in\N\).

Now we are going to relate the quantum group \((C,\Comult[C])\) to
the usual free
orthogonal quantum groups~\(A_o(F)\) of Wang and van
Daele~\cites{Daele-Wang:Universal,Wang:Free_products_of_CQG}.
Recall that~\(A_o(F)\) for an invertible matrix
\(F\in\mathrm{Gl}_n(\C)\) is the universal \(\Cst\)\nb-algebra with
generators~\(x_{ij}\) subject to the relations
\begin{align}
  \label{eq:x_isometry}
  \sum_{k=1}^n x^*_{k i} x_{k j} &= \delta_{i,j},\\
  \label{eq:x_coisometry}
  \sum_{k=1}^n x_{i k} x^*_{j k} &= \delta_{i,j},\\
  \label{eq:F_intertwines}
  \sum_{k=1}^n x_{i k} F_{k j} &= \sum_{k=1}^n F_{i k} x^*_{k j}
  \qquad\text{for }1\le i,j\le n.
\end{align}
Equivalently, \(x=(x_{i j})_{1\le i,j\le n}\in \Mat_n(A_o(F))\) is
unitary and satisfies \(x\cdot F = F\cdot \conj{x}\).  The
comultiplication on \(A_o(F)\) is the unique \Star{}homomorphism
\(\Comult[A_o(F)]\) with
\[
  \Comult[A_o(F)](x_{i j}) = \sum_{k=1}^n x_{i k} \otimes x_{k j}.
\]

Now we assume for some time that \(d=0\).  Then~\eqref{eq:F_from_xi}
simplifies to \(F_{i j} = \omega_{j i}\) or \(F = \Omega^\top\).
Equation~\eqref{eq:Xi_condition_2} says that \(F \conj{F} = c\cdot
1\) because \(d=0\).  So \(A_o(F)\) is defined and the results of
Banica~\cite{Banica:Rep_On} apply.

By assumption, \(F_{i j} = \omega_{j i}\) vanishes unless
\(d_i + d_j = 0\).  Therefore, multiplying each~\(x_{i j}\)
by~\(\zeta^{d_i - d_j}\) preserves the relations
\eqref{eq:x_isometry}--\eqref{eq:F_intertwines} that define
\(A_o(F)\).  Then there is an automorphism~\(\alpha\) of the
\(\Cst\)\nb-algebra \(A_o(F)\) with
\(\alpha(x_{i j}) \defeq \zeta^{d_i-d_j} x_{i j}\) for
\(1 \le i,j \le n\).  This is even a Hopf \Star{}homomorphism, that
is,
\(\Comult[A_o(F)]\circ \alpha = (\alpha \otimes \alpha) \circ
\Comult[A_o(F)]\).  There is also a unique unital Hopf
\Star{}homomorphism \(\varphi\colon A_o(F) \to C\) mapping
\(x_{i j} \mapsto t_{i j}\) for \(1 \le i,j \le n\).  This follows
from the universal property of~\(A_o(F)\) because \(t\in \Mat_n(C)\)
is a representation of~\((C,\Comult[C])\) and~\(F\) is an
intertwiner from~\(\conj{t}\) to~\(t\).  The following proposition
implies that~\(\varphi\) is injective:

\begin{proposition}
  \label{pro:relate_to_AoF}
  Let \(d=0\).  The Hopf \Star{}homomorphism
  \(\varphi\colon A_o(F) \to C\) extends to an isomorphism
  \(A_o(F) \rtimes_\alpha \Z \cong C\).
\end{proposition}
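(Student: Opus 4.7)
The plan is to identify \(A_o(F) \rtimes_\alpha \Z\) with \(C\) by building mutually inverse unital \Star{}homomorphisms using the universal properties of both sides; since \(\Z\) is amenable, the full and reduced crossed products coincide, so the universal property of \(A_o(F) \rtimes_\alpha \Z\) is available.

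For the forward direction, I would invoke the universal property of the crossed product: any covariant pair consisting of a unital \Star{}homomorphism \(A_o(F) \to C\) and a unitary in~\(C\) implementing~\(\alpha\) by conjugation extends uniquely to a \Star{}homomorphism from \(A_o(F) \rtimes_\alpha \Z\) into~\(C\). I take \(\varphi\) itself together with the unitary \(z\in C\). The covariance condition reads \(z\, t_{i j}\, z^* = \zeta^{d_i-d_j} t_{i j} = \varphi(\alpha(x_{i j}))\), and this is exactly the commutation relation already built into~\(C\). Hence~\(\varphi\) extends to \(\tilde\varphi\colon A_o(F) \rtimes_\alpha \Z \to C\) sending the implementing unitary of the crossed product to~\(z\).

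For the inverse, I would use the reformulated presentation of~\(C\) established just before the proposition: \(C\) is the universal unital \(\Cst\)\nb-algebra generated by a unitary~\(z\) and a unitary matrix \(t=(t_{i j})\) subject to \(z\, t_{i j} = \zeta^{d_i-d_j} t_{i j}\, z\) and (since \(d=0\)) the relation \(t\cdot F = F\cdot \conj{t}\) coming from~\eqref{eq:xi_invariant_for_t}. Inside \(A_o(F) \rtimes_\alpha \Z\), the generators \(x_{i j}\) and the implementing unitary~\(\tilde z\) satisfy precisely these relations: \(x\cdot F = F\cdot \conj{x}\) is the defining relation of~\(A_o(F)\), and the covariance of the crossed product gives \(\tilde z\, x_{i j}\, \tilde z^* = \alpha(x_{i j}) = \zeta^{d_i-d_j} x_{i j}\). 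By the universal property of~\(C\), there is a unital \Star{}homomorphism \(\psi\colon C \to A_o(F) \rtimes_\alpha \Z\) with \(z\mapsto \tilde z\) and \(t_{i j}\mapsto x_{i j}\).

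Finally I would verify that \(\tilde\varphi\circ\psi\) and \(\psi\circ\tilde\varphi\) fix the respective generating sets, so both compositions are the identity by the uniqueness clauses of the two universal properties. The step that deserves most care, rather than being a genuine obstacle, is matching the two presentations of~\(C\): one must use the \(t_{i j} = z^{d_i} u_{i j}\) change of generators and check that with \(d=0\) the relation \eqref{eq:xi_invariant_for_t} becomes exactly \(t\cdot F = F\cdot \conj{t}\), which is the defining relation of~\(A_o(F)\) with the matrix \(F_{i j} = \omega_{j i}\) from \eqref{eq:F_from_xi}.
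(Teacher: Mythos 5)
Your proposal is correct and follows essentially the same route as the paper: both arguments come down to matching the presentation of \(A_o(F)\rtimes_\alpha\Z\) by generators \(x_{ij},\tilde z\) and relations (the \(A_o(F)\) relations plus unitarity of~\(\tilde z\) and the covariance relation \(\tilde z x_{ij}\tilde z^*=\zeta^{d_i-d_j}x_{ij}\)) with the presentation of~\(C\) in the generators \(z, t_{ij}\) worked out at the end of Section~\ref{sec:semidirect_product}, specialised to \(d=0\). The paper compresses this into the single observation that the two presentations coincide, whereas you spell out the two universal properties and the mutual-inverse check explicitly; the content is the same.
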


\begin{proof}
  The crossed product \(A_o(F)\rtimes \Z\) is generated by the
  generators~\(x_{i j}\) of~\(A_o(F)\) and an extra unitary~\(z\),
  subject to the relations
  \eqref{eq:x_isometry}--\eqref{eq:F_intertwines}
  defining~\(A_o(F)\), the unitarity conditions
  \(z^* z = z z^* = 1\), and the relation
  \(z x_{i j} z^* = \alpha(x_{i j}) = \zeta^{d_i-d_j} x_{i j}\) for
  \(1 \le i,j\le n\).  When we replace~\(x_{i j}\) by~\(t_{i j}\),
  this gives a presentation of~\(C\) by the computations in
  Section~\ref{sec:semidirect_product}, using \(d=0\).
\end{proof}

Using the Hopf \Star{}homomorphism~\(\varphi\), a representation
\(\varrho \in \Mat_n(A_o(F))\) induces a representation in
\(\varphi_*(\varrho)\in \Mat_n(C)\).  A linear map
\(\psi \colon \C^n \to \C^n\) is an intertwiner for~\(\varrho\) if
and only if it is an intertwiner for \(\varphi_*(\varrho)\)
because~\(\varphi\) is injective.  So~\(\varphi_*\) is a fully
faithful functor from the representation category of~\(A_o(F)\) to
that of~\(C\).  It is also a strict tensor functor because
\(\varphi_*(\varrho_1 \tenscorep \varrho_2) = \varphi_*(\varrho_1)
\tenscorep \varphi_*(\varrho_2)\) and
\(\varphi_*(\varepsilon) = \varepsilon\) for the trivial
representation \(\varepsilon = 1\in A_o(F)\).  The representation
category of~\(A_o(F)\) is described by Banica~\cite{Banica:Rep_On}:
he finds that~\(A_o(F)\) has irreducible representations~\(r_k\) for
\(k\in\N\) such that \(\conj{r_k} \cong r_k\) for all \(k\in\N\) and
\[
  r_k \tenscorep r_s \cong r_{\abs{k-s}} \oplus r_{\abs{k-s}+2}
  \oplus \dotsb \oplus r_{k+s-2} \oplus r_{k+s}.
\]
These are the same fusion rules as for the group \(\mathrm{SU(2)}\).
Using the fully faithful strict tensor functor~\(\varphi_*\), we get
irreducible representations \(\varphi_*(r_k)\) of~\((C,\Comult[C])\)
with the same fusion rules.

\begin{lemma}
  \label{lem:orthogonal_pieces_in_rep_cat}
  Let \(d=0\).
  Let \(\varrho_1,\varrho_2\) be representations of \(A_o(F)\) and
  let \(a,b\in\Z\).  If \(a\neq b\), then there are no intertwiners
  between the representations
  \(z^a \tenscorep \varphi_*(\varrho_1)\) and
  \(z^b \tenscorep \varphi_*(\varrho_2)\).  If \(a=b\), then the two
  maps
  \[
    \Mor(\varrho_1,\varrho_2) \rightrightarrows
    \Mor\bigl(z^a \tenscorep \varphi_*(\varrho_1),
    z^a \tenscorep \varphi_*(\varrho_2)\bigr),\quad
    \Psi \mapsto
    \left\{\begin{array}{l}
      1 \otimes \Psi,\\
      \Dualbraiding{\varphi_*(\varrho_2)}{z^a}(\Psi \otimes 1)
      \Braiding{z^a}{\varphi_*(\varrho_1)},
    \end{array}\right.
  \]
  are equal and invertible.
\end{lemma}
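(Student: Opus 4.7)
The plan is to use the pair description of representations of the bosonisation~$C$ recalled in the discussion preceding Proposition~\ref{pro:projection_on_C}: a representation of~$C$ on~$\Hils$ is equivalent to a pair $(\Corep{S},\Corep{U})$ consisting of a $\T$-representation~$\Corep{S}$ and a braided $A_o(V,\pi,\omega)$-representation~$\Corep{U}$ on~$\Hils$. The first step is to identify the pair for each representation in the statement. The character~$z^a$ corresponds to $(S_a, 1_\C)$ with $S_a\colon w \mapsto w^a$ and trivial braided part. For a subrepresentation $\varrho \subseteq x^{\tenscorep k}$ of~$A_o(F)$, the pair for $\varphi_*(\varrho)$ is $(\Corep{S}^{\Hils_\varrho}, \Corep{U}^{\Hils_\varrho})$, the restriction of the pair for~$t^{\tenscorep k}$ on $V^{\otimes k}$. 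Crucially, the explicit formulas in the paragraph preceding the lemma show that on the fundamental, and hence on tensor powers and their subrepresentations, the external $\T$-representation~$\Corep{S}$ coincides with the internal $\T$-action underlying the braided representation~$\Corep{U}$ (both inherited from the action~$\pi$ on~$V$).  Taking the braided tensor product yields the pair $(S_a \cdot \Corep{S}^{\Hils_\varrho}, \Corep{U}^{\Hils_\varrho})$ for $z^a \tenscorep \varphi_*(\varrho)$: the external $\T$-weight of the weight-$k$ subspace $\Hils_\varrho^{(k)}$ becomes $a+k$, while the $B$-representation and its internal $\T$-action are unchanged.

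For Part~1, any intertwiner $\Psi\colon\Hils_{\varrho_1}\to\Hils_{\varrho_2}$ of the two $C$-representations induces a simultaneous pair-intertwiner.  Intertwining the external $\T$-representations forces $\Psi(\Hils_{\varrho_1}^{(k)}) \subseteq \Hils_{\varrho_2}^{(k+a-b)}$, whereas intertwining the braided $B$-representations forces $\Psi(\Hils_{\varrho_1}^{(k)}) \subseteq \Hils_{\varrho_2}^{(k)}$, because morphisms of braided representations over~$\T$ are by definition $\T$-equivariant for the internal $\T$-actions.  For $a \neq b$ these two shifts are incompatible, hence $\Psi = 0$.

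For Part~2 (case $a=b$), both conditions collapse to the single requirement that $\Psi$ be a $\T$-equivariant $B$-representation intertwiner, which is exactly the condition for $\Psi \in \Mor(\varphi_*(\varrho_1), \varphi_*(\varrho_2))$; by the full faithfulness of~$\varphi_*$ recalled just before the lemma this equals $\Mor(\varrho_1, \varrho_2)$.  The map $\Psi \mapsto 1 \otimes \Psi$ implements this identification tautologically under $\C \otimes \Hils_{\varrho_i} \cong \Hils_{\varrho_i}$, and it is clearly bijective.  Its equality with the second map is a direct computation using the braiding formulas~\eqref{eq:Braiding_L_V}: for $v \in \Hils_{\varrho_1}^{(k)}$, the weight-preservation of~$\Psi$ gives $\Psi(v) \in \Hils_{\varrho_2}^{(k)}$, and then
\[
  \Dualbraiding{\varphi_*(\varrho_2)}{z^a}(\Psi\otimes 1)\Braiding{z^a}{\varphi_*(\varrho_1)}(1\otimes v)
  = \zeta^{ak}\cdot\zeta^{-ak}\cdot 1\otimes\Psi(v)
  = (1\otimes\Psi)(1\otimes v),
\]
so the two maps coincide.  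The main obstacle is the initial step of translating intertwiners of $C$-representations into simultaneous pair-intertwiners: this requires the equivalence of categories from \cite{Meyer-Roy:Braided_mu}*{Theorem~3.4}, together with the observation that in the pair for $\varphi_*(\varrho)$ the external and internal $\T$-actions coincide so that tensoring with~$z^a$ shifts only the external one; once this is in hand, both parts reduce to the clean weight bookkeeping above.
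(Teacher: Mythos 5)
Your part (2) is essentially correct and takes a genuinely different (and arguably cleaner) route than the paper: where the paper reduces to \(\varrho_i = x^{\tenscorep m_i}\), invokes Banica's description of the intertwiner spaces as \Star{}polynomials in the maps inserting the invariant vector~\(\omega\), and uses that~\(\omega\) is \(\T\)\nb-invariant, you simply observe that for \(a=b\) every morphism preserves the weight grading, so the phases \(\zeta^{ak}\) and \(\zeta^{-ak}\) from the two braidings cancel. That is sound, because for \(a=b\) weight preservation does follow from intertwining the \(\T\)\nb-representations.

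Part (1), however, has a genuine gap. You derive two constraints on an intertwiner~\(\Psi\): a shift of the ``internal'' weight by \(a-b\) from the external \(\T\)\nb-representation, and preservation of the internal weight from the braided \(B\)\nb-part, justifying the latter by saying that morphisms of braided representations are by definition \(\T\)\nb-equivariant for the internal action. But in the pair description \((\Corep{S},\Corep{U})\) there is only \emph{one} \(\T\)\nb-representation, namely~\(\Corep{S}\): the braided unitary~\(\Corep{U}\) is required to be invariant with respect to~\(\Corep{S}\), and after tensoring with~\(z^a\) the underlying \(\T\)\nb-representation of the braided representation \emph{is} the shifted one. So ``\(\T\)\nb-equivariance of morphisms of braided representations'' reproduces your first constraint; it is not an independent second one. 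What remains is only that~\(\Psi\) intertwines the unitaries \(1\otimes\Corep{U}^{\Hils_{\varrho_1}}\) and \(1\otimes\Corep{U}^{\Hils_{\varrho_2}}\), and intertwining unitaries alone does not force preservation of any grading (when both unitaries are trivial, every linear map intertwines them, regardless of weights). For the specific representations \(\varphi_*(\varrho_i)\) the weight preservation you want may well hold, but it requires proof, and without it your incompatibility argument for \(a\neq b\) collapses. The paper sidesteps this entirely: the matrix coefficients of \(z^a\tenscorep\varphi_*(\varrho_1)\) lie in \(A_o(F)\cdot z^a\) inside \(C\cong A_o(F)\rtimes\Z\), the subspaces \(A_o(F)\cdot z^a\) for distinct~\(a\) are linearly independent, and two representations whose coefficient spaces meet only in~\(0\) share no irreducible constituent, hence admit no non-zero intertwiner. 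You should replace your weight bookkeeping in part (1) by this (or an equivalent) argument.
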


\begin{proof}
  We have already seen that the braiding unitary~\(\pi_\zeta\) is a
  unitary intertwiner between
  \(z \tenscorep t= z\tenscorep \varphi_*(x)\) and
  \(t \tenscorep z = \varphi_*(x) \tenscorep z\).  Hence the
  appropriate braiding unitary is a unitary intertwiner
  \(z \tenscorep \varphi_*(x^{\tenscorep n}) \cong
  \varphi_*(x^{\tenscorep n}) \tenscorep z\).  Then the braiding
  unitary is a unitary intertwiner
  \(z \tenscorep \varphi_*(\varrho) \cong \varphi_*(\varrho)
  \tenscorep z\) for any representation~\(\varrho\) of~\(A_o(F)\)
  because~\(\varrho\) is a direct sum of direct summands in the
  tensor powers~\(x^{\tenscorep n}\) for \(n\in\N\).  This also
  implies
  \(z^a \tenscorep \varphi_*(\varrho) \cong \varphi_*(\varrho)
  \tenscorep z^a\) for all \(a\in\Z\) and all
  representations~\(\varrho\) of~\(A_o(F)\).

  The matrix coefficients of \(z^a \tenscorep \varphi_*(\varrho_1)\)
  belong to \(A_o(F)\cdot z^a \subseteq A_o(F) \rtimes \Z \cong C\).
  If \(a\neq b\), then the subspaces \(A_o(F)\cdot z^a\) and
  \(A_o(F)\cdot z^b\) of \(A_o(F) \rtimes \Z\) are linearly
  independent.  This implies that there cannot be any non-zero
  intertwiners between \(z^a \tenscorep \varphi_*(\varrho_1)\) and
  \(z^b \tenscorep \varphi_*(\varrho_2)\).


  Now assume \(a=b\).  We have already seen that there are the same
  intertwiners \(\varrho_1 \to \varrho_2\) and
  \(\varphi_*(\varrho_1) \to \varphi_*(\varrho_2)\).  Tensoring with
  the identity on~\(z^a\) on the left gives an intertwiner
  \(z^a \tenscorep \varphi_*(\varrho_1) \to z^a \tenscorep
  \varphi_*(\varrho_2)\).  This map between intertwiner spaces is
  invertible because tensoring with the identity on~\(z^{-a}\) gives
  an inverse map, using \(z^{-a} \tenscorep z^a = \varepsilon\), the
  trivial representation.  Similarly, tensoring on the right with
  the identity on~\(z^a\) gives an intertwiner
  \(\varphi_*(\varrho_1) \tenscorep z^a\to \varphi_*(\varrho_2)
  \tenscorep z^a\).  Conjugating the latter with the braiding
  unitaries
  \(z^a \tenscorep \varphi_*(\varrho_i) \cong \varphi_*(\varrho_i)
  \tenscorep z^a\) for \(i=1,2\) gives another intertwiner
  \(z^a \tenscorep \varphi_*(\varrho_1) \to z^a \tenscorep
  \varphi_*(\varrho_2)\).  We claim that these two intertwiners are
  equal.  Since any representation is contained in a direct sum of
  copies of~\(x^{\tenscorep m}\), it is enough to show this in case
  \(\varrho_i = x^{\tenscorep m_i}\) for \(i=1,2\) for some
  \(m_1,m_2\in\N\).  Banica shows that any intertwiner between the
  representations \(x^{\tenscorep m_i}\) is a \Star{}polynomial in
  the special intertwiners
  \(x^{\tenscorep m} \to x^{\tenscorep m+2}\) that add the invariant
  vector in two consecutive entries.  Therefore, it is enough to
  prove the claim for these special intertwiners.  In this case, the
  result follows because the invariant vector~\(\omega\) is
  \(\T\)\nb-invariant.
\end{proof}

The lemma implies, in particular, that the representations
\(z^a \tenscorep \varphi_*(r_k)\) of~\((C,\Comult[C])\) for
\(a\in\Z\), \(k\in\N\) are all irreducible and distinct.  And any
irreducible representation is one of them because it is
contained in \(z^a \tenscorep t^{\tenscorep l}\) for some
\(a\in\Z\), \(l\in\N\).  As a result, any irreducible representation
of~\((C,\Comult[C])\) is unitarily equivalent to
\(z^a \tenscorep r_k\) for a unique \(a\in\Z\), \(k\in\N\).

These irreducible representations satisfy the fusion rules
\begin{align*}
  \conj{z^k \tenscorep r_l}
  &\cong \conj{r_l} \tenscorep \conj{z^k}
  \cong r_l \tenscorep z^{-k}
  \cong z^{-k} \tenscorep r_l,\\
  (z^b \tenscorep r_a) \tenscorep (z^k \tenscorep r_m)
  &\cong z^{b+k} \tenscorep (r_{\abs{a-m}} \oplus r_{\abs{a-m}+2}
  \oplus \dotsb \oplus r_{a+m-2} \oplus r_{a+m}).
\end{align*}
These are the rules asserted in Theorem~\ref{the:irrep_tensor}.
Thus we have completed the proof of that theorem in the case \(d=0\).

Next, we describe the representation category \(\Rep(C,\Comult[C])\)
of~\((C,\Comult[C])\) for \(d=0\) as a monoidal category.  Let
\(\mathcal{R}_a\subseteq \Rep(C,\Comult[C])\) for \(a\in\Z\) be the
full subcategory of all representations of~\((C,\Comult[C])\) of the
form \(z^a \tenscorep \varphi_*(\varrho)\) for a
representation~\(\varrho\) of~\(A_o(F)\).
Lemma~\ref{lem:orthogonal_pieces_in_rep_cat} implies that there are
no non-zero intertwiners between representations in
\(\mathcal{R}_a\) and~\(\mathcal{R}_b\) for \(a\neq b\) and that the
intertwiners between two representations in~\(\mathcal{R}_a\) are
the same as for the corresponding representations of~\(A_o(F)\).
Thus we get an equivalence of categories
\(\Rep(C,\Comult[C]) \simeq \prod_{a\in\Z} \mathcal{R}_a\) and
isomorphisms of categories \(\mathcal{R}_a \cong \Rep(A_o(F))\) for
all \(a\in\Z\).  The isomorphism
\(\mathcal{R}_0 \cong \Rep(A_o(F))\) is~\(\varphi_*\), which is a
strict tensor functor and thus an isomorphism of monoidal
categories.  The tensor product of two representations in
\(\mathcal{R}_a\) and~\(\mathcal{R}_b\) belongs
to~\(\mathcal{R}_{a+b}\), and
\[
  (z^a \tenscorep \varphi_*(\varrho_1))
  \tenscorep (z^b \tenscorep \varphi_*(\varrho_2))
  \cong z^a \tenscorep z^b \tenscorep \varphi_*(\varrho_1)
  \tenscorep \varphi_*(\varrho_2)
  = z^{a+b} \tenscorep \varphi_*(\varrho_1 \tenscorep \varrho_2),
\]
where the intertwiner is the braiding unitary
\(\Braiding{\varphi_*(\varrho_1)}{\C_{z^b}}_{23}\).  The naturality
of the braiding unitary says that it commutes with \(S\otimes T\) if
\(S\in \Bound(\C_{z^b})\) and \(T\in \Bound(\varphi_*(\varrho_1))\)
are intertwiners or, more generally, \(\T\)\nb-equivariant.
Therefore, when we identify \(\mathcal{R}_a \cong \Rep(A_o(F))\) as
above for all \(a\in\Z\), then the tensor product functor on
\(\Rep(C,\Comult[C])\) restricted to a functor
\(\mathcal{R}_a \times \mathcal{R}_b \to \mathcal{R}_{a+b}\) becomes
the usual tensor product functor on \(\Rep(A_o(F))\).  The
associators
\((\varrho_1\tenscorep \varrho_2)\tenscorep \varrho_3 \cong
\varrho_1\tenscorep (\varrho_2\tenscorep \varrho_3)\) and the unit
transformations
\(\varepsilon \tenscorep \varrho \cong \varrho \cong \varrho
\tenscorep \varepsilon\) also restrict to the usual ones.  So the
representation category of~\((C,\Comult[C])\) is equivalent to the
representation category of \(A_o(F)\otimes \Cont(\T)\).  In other
words, \((C,\Comult[C])\) is monoidally equivalent to
\(A_o(F)\otimes \Cont(\T)\) (still if \(d=0\)).  It is already known
that \(A_o(F)\) is monoidally equivalent to \(\qSU\) for a unique
\(q \in [-1,1]\setminus\{0\}\)
(see~\cite{Bichon-de_Rijdt-Vaes:Ergodic}).  Hence~\((C,\Comult[C])\)
is monoidally equivalent to \(\qSU\otimes \Cont(\T)\) for the
same~\(q\).  This finishes the proof of
Theorem~\ref{the:monoidal_equivalence} in case \(d=0\).

\begin{remark}
  The equivalence of categories above does not preserve the functor
  to the underlying Hilbert spaces because the intertwiner
  \(z \tenscorep t \cong t \tenscorep z\) is not just the flip, but
  a non-trivial braiding operator.  Hence Woronowicz's
  Tannaka--Krein Theorem for compact quantum groups
  from~\cite{Woronowicz:Tannaka-Krein} does not apply and it does
  not follow that the quantum group~\((C,\Comult[C])\) would be
  isomorphic to \(\qSU\otimes \Cont(\T)\).
\end{remark}

Finally, we remove the assumption \(d=0\).  First, if~\(d\) is even,
then we use the \(\T\)\nb-equivariant Hopf \Star{}isomorphism of
Lemma~\ref{lem:shift_degrees} with the shift \(s=d/2\) to reduce to
the case \(d=0\).  Let \(\pi' = z^{-d/2} \pi\) and
\(F_{j i} = \omega'_{i j} = \zeta^{-d d_j/2} \omega_{i j}\) for
\(1 \le i,j \le n\).  Then \(A_o(V,\pi,\omega) \cong A_o(V,\pi',\omega')\)
as a braided quantum group, and hence the bosonisations are also isomorphic.  
Thus the description of the irreducible
representations and the monoidal equivalence to
\(A_o(F)\otimes \Cont(\T)\) or to \(\qSU\otimes \Cont(\T)\) for a
unique \(q\in [-1,1]\setminus\{0\}\) carry over to all
\(A_o(V,\pi,\omega)\) with even~\(d\).

If~\(d\) is odd, then we pass to a double cover of~\(\T\) as in
Section~\ref{sec:construct}.  More precisely, we replace the
representation~\(\pi\) of~\(\T\) by \(\pi''_z \defeq \pi_{z^2}\),
take the fourth root of~\(\zeta\), and the same vector~\(\omega\).  The
fundamental representation~\(t\) of \(A_o(V,\pi,\omega)\) corresponds
to the representation
\((z'')^{-d}\tenscorep t'' = (z'')^{-d} \tenscorep r_1\) of
\(A_o(V,\pi'',\omega)\), and the character~\(z\) on \(A_o(V,\pi,\omega)\)
corresponds to the character~\((z'')^2\).  Since any representation
of \(A_o(V,\pi,\omega)\) is contained in a tensor power of the two
fundamental representations \(z\) and~\(t\), we see that the
irreducible representations of \(A_o(V,\pi'',\omega)\) that come from
irreducible representations of \(A_o(V,\pi,\omega)\) are
\(z^a \tenscorep \varphi_*(r_b)\) for which \(b-a\) is even.  The
representation category of \(A_o(V,\pi,\omega)\) is the full monoidal
subcategory of \(\Rep(A_o(V,\pi'',\omega))\) whose objects are
isomorphic to direct sums of these particular irreducible
representations.  This allows to carry over the results proven for
\(d=0\) to the case of odd~\(d\) as well.  This gives Theorems
\ref{the:irrep_tensor} and~\ref{the:monoidal_equivalence} in
complete generality; \(\Rep(A_o(V,\pi,\omega))\) and \(\Rep(\qU)\)
correspond to the same subcategory of representations of
\(\qSU \times \Cont(\T)\) because our construction gives the same
subcategory for all \(A_o(V,\pi,\omega)\) with odd~\(d\), including
\(\qSU\).

\begin{bibdiv}
  \begin{biblist}
    \bibselect{references}
  \end{biblist}
\end{bibdiv}
\end{document}